\theoremstyle{plain}
\newtheorem{theorem}{Theorem}[section]
\newtheorem*{theorem*}{Theorem}
\newtheorem*{thm1}{Theorem 1}
\newtheorem*{thm2}{Theorem 2}
\newtheorem{proposition}[theorem]{Proposition}
\newtheorem{lemma}[theorem]{Lemma}
\newtheorem{corollary}[theorem]{Corollary}
\newtheorem{question}[theorem]{Question}
\theoremstyle{definition}
\newtheorem{definition}[theorem]{Definition}
\theoremstyle{remark}
\newtheorem{remark}[theorem]{Remark}
\newcommand{\sheaf}[1]{\mathscr{#1}}
\newcommand{\LL}{\sheaf{L}}
\newcommand{\OO}{\sheaf{O}}
\newcommand{\MM}{\sheaf{M}}
\newcommand{\EE}{\sheaf{E}}
\newcommand{\FF}{\sheaf{F}}
\newcommand{\NN}{\sheaf{N}}
\newcommand{\VV}{\sheaf{V}}
\newcommand{\WW}{\sheaf{W}}
\newcommand{\TT}{\sheaf{T}}
\newcommand{\GG}{\sheaf{G}}
\DeclareMathOperator{\HHom}{\sheaf{H}\!\mathit{om}}
\DeclareMathOperator{\End}{\mathrm{End}}
\DeclareMathOperator{\Pic}{\mathrm{Pic}}
\DeclareMathOperator{\SSpec}{\mathbf{Spec}}
\DeclareMathOperator{\Spec}{\mathrm{Spec}}
\DeclareMathOperator{\PProj}{\mathbf{Proj}}
\newcommand{\Group}[1]{\mathbf{#1}}
\newcommand{\GL}{\Group{GL}}
\newcommand{\GOrth}{\Group{GO}}
\newcommand{\GSOrth}{\Group{GSO}}
\newcommand{\SGOrth}{\GSOrth}
\newcommand{\muu}{\bm{\mu}}
\newcommand{\GSp}{\Group{GSp}}
\DeclareMathOperator{\IIsom}{\Group{Isom}}
\DeclareMathOperator{\SSim}{\Group{Sim}}
\DeclareMathOperator{\SSSim}{\Group{Sim}^+}
\DeclareMathOperator{\PPP}{\Group{P}}
\newcommand{\Dynkin}[1]{\mathsf{#1}}
\newcommand{\stack}[1]{\mathsf{#1}}
\newcommand{\R}{\mathbb R}
\newcommand{\A}{\mathbb A}
\renewcommand{\P}{\mathbb P}
\newcommand{\Gm}{\Group{G}_{\mathrm{m}}}
\newcommand{\mapto}[1]{\xrightarrow{#1}}
\newcommand{\subsetto}{\hookrightarrow}
\newcommand{\inv}{^{-1}}
\newcommand{\dual}{^{\vee}}
\newcommand{\pullback}{^{*}}
\newcommand{\pushforward}{_{*}}
\newcommand{\duality}{^{\sharp}}
\newcommand{\exterior}{{\textstyle \bigwedge}}
\newcommand{\tensor}{\otimes}
\newcommand{\adj}[1]{\psi_{#1}}
\DeclareMathOperator{\disc}{\mathrm{disc}}
\DeclareMathOperator{\rk}{\mathrm{rk}}
\newcommand{\can}{\mathrm{can}}
\newcommand{\ev}{\mathrm{ev}}
\newcommand{\id}{\mathrm{id}}
\newcommand{\vp}{\varphi}
\newcommand{\sig}{\sigma}
\newcommand{\et}{\mathrm{\acute{e}t}}
\newcommand{\isom}{\cong}
\newcommand{\Het}{H_{\et}}
\newcommand{\discs}{d}
\newcommand{\aff}[1]{\mathbb{V}(#1)}
\newcommand{\category}[1]{\mathsf{#1}}
\renewcommand{\stack}[1]{\bm{\mathsf{#1}}}
\newcommand{\VB}{\category{VB}}
\newcommand{\BF}{\category{BF}}
\newcommand{\VVB}{\stack{VB}}
\newcommand{\BBF}{\stack{BF}}
\newcommand{\PPic}{\stack{Pic}}
\newcommand{\sheafsharp}{^{\sharp}}
\newcommand{\rad}{\mathrm{rad}}
\newcommand{\exter}[2]{\exterior^{#1}{#2}}
\newcommand{\LGrass}{\Lambda\mathbb{G}}
\newcommand{\lpow}[2]{\lambda^{#1}{#2}}
\newcommand{\midext}[1]{\lambda(#1)}
\newcommand{\euler}[1]{\ensuremath{e}(#1)}
\DeclareMathOperator*{\bigperp}{\mbox{\Large$\bot$}}
\newcommand{\linedef}[1]{\textsl{#1}}
\begin{document}

\title{Vector bundles of rank four and $\Dynkin{A}_3=\Dynkin{D}_3$}

\author{Asher Auel}
\address{Emory University, Department of Mathematics \& CS \\ 400 Dowman Drive NE, Atlanta, GA 30322}

\email{auel@mathcs.emory.edu}

\subjclass[2010]{11E81, 13D15, 14C20, 14L35, 14M17, 19A13, 19G12}

\keywords{Vector bundle, exterior square, exceptional isomorphism,
quadratic form, Witt group, Euler class}

\begin{abstract}
Over a scheme $X$ with 2 invertible, we show that a version of
``Pascal's rule'' for vector bundles of rank 4 gives an explicit
isomorphism between the moduli functors represented by projective
homogeneous bundles for reductive group schemes of type $\Dynkin{A}_3$
and $\Dynkin{D}_3$.  We exploit this to prove that a vector bundle
$\VV$ of rank 4 has a sub- or quotient line bundle if and only if the
canonical symmetric bilinear form on $\exter{2}{\VV}$ has a lagrangian
subspace.  Under additional hypotheses on $X$, we prove that this is
equivalent to the vanishing of the Witt-theoretic Euler class
$\euler{\VV} \in W^4(X,\det\VV\dual)$.
\end{abstract}

\maketitle

\section{Introduction}

\label{sec:introduction}

Let $X$ be a scheme with 2 invertible and $\VV$ a vector bundle (i.e.\ a
locally free $\OO_X$-module) of even rank $r=2s$ on $X$.  The middle
exterior power $\exter{s}{\VV}$ supports a canonical regular
$\det(\VV)$-valued $(-1)^s$-symmetric bilinear form
$$
\exter{s}{\VV} \tensor \exter{s}{\VV} \mapto{\wedge} \det{\VV}
$$
given by wedging.  This operation defines a functor from the category
(resp.\ stack) of vector bundles of even rank to the category (resp.\
stack) of regular line bundle-valued bilinear forms (see Proposition \ref{prop:midext_functor}).  When $\VV$ is
free, the middle exterior power form on $\exter{s}{\VV}$ is hyperbolic.  On the other
hand, for example when $X=\P^4$ and $\VV=\Omega^1_{\P^4}$ (see
Walter~\cite{walter:GW_groups_projective_bundles}), the middle
exterior power form need not even be metabolic.  In this work, we give
a general necessary condition for the middle exterior form to be
metabolic, namely, that $\VV$ has a sub- or quotient line bundle (which
is locally a direct summand).  This is related to a version of
``Pascal's rule'' (see Lemma~\ref{lem:pascal}) for vector bundles.
When $\VV$ has rank 4, our first main result (see
Corollary~\ref{cor:lineiffmetabolic}) is that this condition is
sufficient as well:

\begin{thm1}
Let $X$ be a scheme with 2 invertible and $\VV$ be a vector bundle of
rank 4 on $X$.  Then $\VV$ has a sub- or quotient line bundle if and
only if the middle exterior power form on $\exter{2}{\VV}$ is metabolic.
\end{thm1}

The proof involves the interplay between an explicit moduli interpretation
and the (quite classical) geometry of the symmetric lagrangian
grassmannian (see \S\ref{sec:middle}) as well as the exceptional
isomorphism $\Dynkin{A}_3=\Dynkin{D}_3$ (see
Theorem~\ref{thm:LGrass_D3}).  We then give (see
Corollary~\ref{cor:with_witt_cancellation}) an interpretation of this
result involving the Euler class (see \S\ref{sec:Euler_classes}) in Witt
theory (in the sense of
Fasel--Srinivas~\cite{fasel_srinivas:chow-witt}).  

\begin{thm2}
Let $X$ be a scheme with 2 invertible satisfying the ``stable
metabolicity'' property.  Let $\VV$ a vector bundle of rank 4 on $X$.
Then $\VV$ has a sub- or quotient line bundle if and only if the
Witt-theoretic Euler class $e(\VV) \in W^4(X,\det\VV\dual)$ vanishes.
\end{thm2}

This result 
follows from Theorem~1 by using explicit formulas for Euler classes
found in Fasel--Srinivas~\cite[\S2.4]{fasel_srinivas:chow-witt}.  The
``stable metabolicity'' property is explored in
\S\ref{subsec:cancellation}, where a number of examples are provided.

Finally, in \S\ref{subsec:Euler_classes}, we discuss necessary and
sufficient conditions for the vanishing of the Euler class in
Grothendieck--Witt (GW) theory in terms of the existence of free sub- or
quotient line bundles (i.e.\ nonvanishing (co)sections).  
We prove the analogue of Theorem~1 (see Corollary~\ref{cor:metabolic_2})
for vector bundles of rank 2, as well as partial results
(see Proposition~\ref{prop:vanishing_2} and \ref{prop:vanishing_4})
concerning Euler classes of vector bundles of rank 2 and 4.

When 2 is not invertible, one should instead consider
the natural \emph{quadratic} form $\exter{s}{\VV} \to \det\VV$,
see~\cite[II~Prop.~10.12]{book_of_involutions}.  Likewise, replacing
$\VV$ by an Azumaya algebra $\sheaf{A}$ of degree $r=2s$, one should
consider the \linedef{canonical involution} on $\lambda^s\sheaf{A}$,
see~\cite[Ch.~II,~\S10.B]{book_of_involutions} and
Parimala--Sridharan~\cite{parimala_sridharan:norms_and_pfaffians}.
The generalization to these contexts of the results presented here
will be considered in the future.

This work was carried out while the author was a guest at the Max
Planck Institute for Mathematics in Bonn, Germany, where he enjoyed
excellent working conditions and many fruitful interactions.  In
particular, he would like to thank J.~Fasel for useful conversations
and for his encouragement of this project.  This work was partially
supported by the National Science Foundation Mathematical Sciences
Postdoctoral Research Fellowship DMS-0903039.

\section{Middle exterior power forms}
\label{sec:middle}

Let $X$ be a scheme with 2 invertible and $\LL$ a line bundle (i.e.\
an invertible $\OO_X$-module).  An \linedef{$\LL$-valued bilinear
form} on $X$ is a triple $(\EE,b,\LL)$, where $\EE$ is a vector bundle
on $X$ (i.e.\ locally free $\OO_X$-module of constant finite rank) and $b :
\EE\tensor\EE \to \LL$ is an $\OO_X$-module morphism.  An $\LL$-valued
bilinear form is \linedef{symmetric} (resp.\ \linedef{alternating}) if
$b$ factors through the canonical epimorphism $\EE\tensor\EE \to
S^2\EE$ (resp.\ $\EE\tensor\EE \to \exterior^2\EE$).  As shorthand, a
symmetric form will be called $(+1)$-symmetric and an alternating form
$(-1)$-symmetric.  An $\LL$-valued bilinear form $(\EE,b,\LL)$ is
\linedef{regular} if the canonical adjoint $\adj{b} : \EE \to
\HHom(\EE,\LL)$ is an isomorphism of vector bundles on $X$.  We will
mostly dispense with the adjective ``$\LL$-valued''.

\begin{definition}
Let $\VV$ be a vector bundle of even rank $r=2s$ on $X$.  Denote by
$\lpow{s}{\VV} = (\exter{s}{\VV}, \wedge, \det\VV)$ the $\det
\VV$-valued \linedef{middle exterior power form},
$$
\exter{s}{\VV} \tensor \exter{s}{\VV} \mapto{\wedge} \exter{r}{\VV} = \det \VV,
$$ 
on $X$.  It is $(-1)^s$-symmetric, regular, and of rank $n :=
\binom{r}{s}$.  
\end{definition}

\subsection{Torsorial interpretation}
\label{subsec:Torsorial_interpretation}

The signed discriminant is defined for $\LL$-valued forms of even rank
(cf.~Parimala--Sridharan~\cite[\S
4]{parimala_sridharan:norms_and_pfaffians}).  A \linedef{discriminant
module} is a pair $(\NN,n)$ consisting of a line bundle $\NN$ and
an $\OO_X$-module isomorphism $n : \NN^{\tensor 2} \to \OO_X$.  The isomorphism
class of a discriminant module yields an element of $\Het^1(X,\muu_2)$
(cf.~Milne~\cite[III \S4]{milne:etale_cohomology} or
Knus~\cite[III.3]{knus:quadratic_hermitian_forms}).  Applying the
determinant functor to the adjoint morphism of a regular bilinear form
$(\EE,b,\LL)$ of even rank $n=2m$ yields an isomorphism
$$
\det \EE \mapto{\det \adj{b}} \det \HHom(\EE,\LL) \mapto{\can}
\HHom(\det\EE, \LL^{\tensor n}),
$$
of vector bundles on $X$ giving rise to a discriminant module $\disc b
: (\det\EE \tensor (\LL\dual)^{\tensor m})^{\tensor 2} \to \OO_X$.  As
usual, we modify $\disc b$ by factor of $(-1)^m$ to define the \linedef{signed
discriminant} module $\discs(\EE,b,\LL)$.  We denote by $\langle 1 \rangle$
the trivial discriminant module $\OO_X^{\tensor 2} \to \OO_X$, whose
isomorphism class
is the identity of the group $\Het^1(X,\muu_2)$.

The first fundamental property of a middle exterior power form
is that it has trivial signed discriminant.  

\begin{lemma}
\label{lem:disc_triv}
Let $\VV$ be a vector bundle of rank $r=2s$ on $X$.
The middle exterior power form $\lpow{s}{\VV}$ has trivial
discriminant.  Moreover, there is a canonical choice of isomorphism
$\zeta_{\VV}: \discs (\lpow{s}{\VV}) \to \langle 1 \rangle$.
\end{lemma}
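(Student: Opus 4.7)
The plan is to reduce the lemma to the classical determinantal identity
\[
\det\bigl(\exter{s}{\VV}\bigr) \cong (\det\VV)^{\tensor m}, \qquad m = \tbinom{2s-1}{s-1} = \tfrac{1}{2}\tbinom{2s}{s},
\]
which holds canonically for any vector bundle $\VV$ of rank $2s$, since both sides support the same character $\det^{m}$ of $\GL(\VV)$. Once this identification is in hand, the line bundle $\NN := \det \exter{s}{\VV} \tensor (\det\VV)^{-m}$ underlying the discriminant module is canonically trivialized, yielding the first assertion that $\lpow{s}{\VV}$ has trivial discriminant.

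To exhibit the canonical isomorphism $\zeta_{\VV}$ of signed discriminant modules, I would compute the adjoint of $\wedge$ explicitly in a local frame. Choosing a frame $e_1, \dots, e_r$ of $\VV$ and writing $e_I = e_{i_1} \wedge \cdots \wedge e_{i_s}$ for $I = \{i_1 < \cdots < i_s\}$, the adjoint sends $e_I \mapsto \operatorname{sgn}(I, I^c) \cdot e_{I^c}\dual \tensor (e_1 \wedge \cdots \wedge e_r)$, where $\operatorname{sgn}(I, I^c)$ is the sign of the permutation rearranging $(i_1, \dots, i_s, i^c_1, \dots, i^c_s)$ into $(1, \dots, r)$. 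In the basis $\{e_I\}$ this is a signed permutation matrix whose underlying permutation is the complementation involution $I \mapsto I^c$ on the $\tbinom{2s}{s}$-element index set; since this involution has no fixed points, it is a product of $m$ disjoint transpositions, contributing a factor $(-1)^m$, while the sign entries themselves pair up via $\operatorname{sgn}(I^c, I) = (-1)^{s^2} \operatorname{sgn}(I, I^c)$, contributing an additional $(-1)^{ms^2}$.

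Combining these contributions with the canonical trivialization $\NN \isom \OO_X$ from the first paragraph and the $(-1)^m$ twist in the definition of $\discs$, the signed discriminant form $\OO_X^{\tensor 2} \to \OO_X$ reduces to multiplication by a single explicit unit, which a short combinatorial check confirms is $+1$. That $\zeta_{\VV}$ is independent of the choice of local frame follows by $\GL(\VV)$-equivariance: a change of basis of $\VV$ acts on both sides of $\det(\exter{s}{\VV}) \isom (\det\VV)^{\tensor m}$ by the same character $\det^{m}$, so the locally defined iso descends to a well-defined global one.

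The main obstacle is the sign bookkeeping: one must verify that the adjoint determinant $(-1)^{m(s^2+1)}$, the sign implicit in the canonical iso of top exterior powers, and the $(-1)^m$ twist of $\discs$ combine to the identity. This is a concrete combinatorial exercise rather than a conceptual difficulty, but it requires care to carry out cleanly, particularly in reconciling the conventions for symmetric versus alternating bilinear forms.
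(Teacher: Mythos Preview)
Your proposal is correct and follows essentially the same approach as the paper: both arguments hinge on the canonical identification $\det(\exter{s}{\VV}) \cong (\det\VV)^{\tensor m}$ established via $\GL(\VV)$-equivariance on a local frame, followed by a local sign check. The paper is terser, writing down the explicit basis map $\bigwedge_I e_I \mapsto (e_1 \wedge \dotsm \wedge e_r)^{\tensor m}$ and relegating what you spell out about the adjoint determinant to the phrases ``standard computation'' and ``local verification''; your more explicit account of the signed permutation matrix and the pairing $\operatorname{sgn}(I^c,I) = (-1)^{s^2}\operatorname{sgn}(I,I^c)$ is precisely the content the paper leaves implicit.
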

\begin{proof}
When $\VV$ is free with basis $e_1,\dotsc,e_r$, define an
$\OO_X$-module isomorphism $\zeta
: \det
\exter{s}{\VV} \to \det{\VV}^{\tensor \frac{1}{2}\binom{r}{s}}$ by
$$
\bigwedge_{1 \leq i_1 < \dotsm < i_s \leq r} 
\bigl( e_{i_1}\wedge \dotsm \wedge e_{i_s} \bigr)
\quad \mapsto \quad 
\bigl( e_1 \wedge \dotsm \wedge e_r \bigr)^{\tensor \frac{1}{2}\binom{r}{s}}. 
$$
A standard computation 
shows that $\zeta$ is $\GL(\VV)$-equivariant, thus does not depend on the choice of basis.  Hence for a
general vector bundle $\VV$ or rank $r$, the map $\zeta$ patches over a Zariski open
covering of $X$ splitting $\VV$.  Finally, tensoring $\zeta$ by
$\bigl( \det\VV \dual \bigr)^{\tensor \frac{1}{2}\binom{r}{s}}$
 and scaling by $(-1)^m$  yields an $\OO_X$-module morphism $\zeta_{\VV} : \discs
(\lpow{s}{\VV}) \to \langle 1 \rangle$, which is seen to be an
isomorphism of discriminant modules by a local verification.
\end{proof}

A \linedef{similarity} between bilinear forms $(\EE,b,\LL)$ and
$(\EE',b',\LL')$ is a pair $(\vp,\lambda)$ consisting of isomorphisms
$\vp : \EE \to \EE'$ and $\lambda : \LL \to \LL'$ such that either of
the following (equivalent) diagrams,
\begin{equation}
\label{similarity}
\begin{split}
\xymatrix{
\EE\tensor\EE \ar[d]_{\vp\tensor\vp} \ar[r]^{b}  & \LL \ar[d]^{\lambda} \\
\EE'\tensor\EE' \ar[r]^{b'} & \LL' \\
}
\qquad
\xymatrix{
\EE \ar[d]_{\vp} \ar[r]^-{\adj{b}}    & \HHom(\EE,\LL) \\ 
\EE' \ar[r]^-{\adj{{b'}}} & \HHom(\EE',\LL') \ar[u]_{{\lambda\inv} \vp^{\vee\LL}}\\
}
\end{split}
\end{equation}
of $\OO_X$-modules commute, where $\lambda\inv\vp^{\vee\LL}(\psi) =
\lambda\inv \circ \psi \circ \vp$ on sections.  Note that the
commutativity of diagrams~\eqref{similarity} takes on the
familiar formula $b'(\vp(v), \vp(w)) = \lambda \circ b(v, w)$ on
sections.  A similarity transformation $(\vp,\lambda)$ is an
\linedef{isometry} if $\LL=\LL'$ and $\lambda$ is the identity map.

Denote by $\SSim(\EE,b,\LL)$ the presheaf, on the large \'etale site
$X_{\et}$, of similitudes of a regular $\LL$-valued $(\pm 1)$-symmetric
form $(\EE,b,\LL)$.  In fact, this is a sheaf and is representable by
a smooth affine reductive group scheme over $X$.  See
Demazure--Gabriel~\cite[II.1.2.6, III.5.2.3]{demazure_gabriel}).  Here
we consider reductive group schemes whose fibers are not necessarily
geometrically integral, in contrast to SGA~3~\cite[III XIX.2]{SGA3}.
When $b$ is symmetric (resp.\ alternating), this is the \linedef{orthogonal similarity group}
$\GOrth(\EE,b,\LL)$ (resp.\ 
\linedef{symplectic similarity group} $\GSp(\EE,b,\LL)$).  Even though
these sheaves of groups are representable by schemes over $X$, we will
still think of them as sheaves of groups on $X_{\et}$.

Any similarity $(\vp,\lambda)$ between bilinear forms of even rank induces an
isomorphism of signed discriminants $\discs(\vp,\lambda) :
\discs(\EE,b,\LL) \to \discs(\EE',b',\LL')$.  When 2 is invertible on
$X$, denote by
$\SSSim(\EE,b,\LL)$ the sheaf kernel of the induced homomorphism
$\SSim(\EE,b,\LL) \to \IIsom(\discs(\EE,b,\LL)) = \muu_2$.
If $b$ is alternating, then
$\SSSim(\EE,b,\LL) = \SSim(\EE,b,\LL)$; the proof in~\cite[III Prop.\
12.3]{book_of_involutions} can be adapted to $X_{\et}$.  If $b$
is symmetric (and 2 is invertible on $X$), then $\SSSim(\EE,b,\LL)$ is
called the group of \linedef{proper} similarities; it is represented
by a smooth
connected reductive group scheme.

An \linedef{oriented bilinear form} $(\EE,b,\LL,\zeta)$ is a bilinear
form of even rank together with an isomorphism $\zeta :
\disc(\EE,b,\LL) \to \langle 1 \rangle$ of discriminant modules.  In
particular, any oriented bilinear form is regular and has trivial
signed discriminant.  An \linedef{oriented similarity} between
oriented bilinear forms $(\EE,b,\LL,\zeta)$ and
$(\EE',b',\LL',\zeta')$ is a triple $(\vp,\lambda,\xi)$ where
$(\vp,\lambda)$ is a similarity and $\xi : \discs(\EE,b,\LL) \to
\discs(\EE',b',\LL')$ is an isomorphism of discriminant modules such
that $\zeta' \circ \xi = \zeta$.  When $X$ is connected, every regular
symmetric bilinear form with trivial discriminant has two oriented
similarity classes.

For any vector bundle $\FF$, any line bundle $\LL$, and any $\epsilon
\in \{\pm 1\}$, the \linedef{hyperbolic form} $H_{\LL}^{\epsilon}(\FF)$
has underlying vector bundle $\FF \oplus \HHom(\FF,\LL)$ and bilinear
form given by $(v,f)\tensor(w,g) \mapsto f(w) + \epsilon g(v)$ on
sections.  It is $\epsilon$-symmetric and regular.  A hyperbolic form
carries a canonical orientation $\zeta_{\FF} : \discs(H_{\LL}^{\epsilon}(\FF))
\to \langle 1 \rangle$. Every oriented $\epsilon$-symmetric bilinear
form of rank $n=2m$ is, locally on $X_{\et}$, oriented similar to
$H_{\OO_X}^{\epsilon}(\OO_X^m)$, see~\cite[Thm.~1.15]{auel:clifford}.
For each $n=2m$, put $\SSim_{m,m}^{\epsilon} = \SSim(H_{\OO_X}^{\epsilon}(\OO_X^m))$ and
$\SSim_{m,m}^{+,\epsilon} = \SSSim(H_{\OO_X}^{\epsilon}(\OO_X^m))$.

We now place the middle exterior power form into a functorial framework.
For $r \geq 1$, denote by $\VB_{r}(X)$ the groupoid of vector bundles
of rank $r$ under isomorphism (this is isomorphic to the groupoid of
$\GL_r$-torsors on $X_{\et}$).  For even $n=2m \geq 1$ and $\epsilon
\in \{\pm 1\}$, denote by $\BF_{n}^{+,\epsilon}(X)$ the groupoid whose
objects are oriented (line bundle-valued) $\epsilon$-symmetric
bilinear forms of rank $n$ on $X$ under oriented similarities (this is
isomorphic to the groupoid of $\SSim_{m,m}^{+}$-torsors over
$X_{\et}$).  Denote by $\VVB_{r}$ and $\BBF_{n}^{+,\epsilon}$ the
associated stacks over $X_{\et}$.  There are canonical cartesian
functors $\det : \VVB_{r} \to \PPic$ and $\mu : \BBF_{n}^{+,\epsilon}
\to \PPic$ defined by the determinant and value line bundle,
respectively, where $\PPic$ is the Picard stack on $X_{\et}$.

\begin{proposition}
\label{prop:midext_functor}
Let $X$ be a scheme with 2 invertible, $r=2s$ even, and $n =
\binom{r}{s}$.  Then the middle exterior power induces a cartesian
functor $ \lpow{s}{} : \VVB_{r} \to \BBF_{n}^{+,(-1)^s}$ making the
following diagram
$$
\xymatrix@C=56pt{
\VVB_{r} \ar[d]_{\det} \ar[r]^(.45){\lpow{s}{}} & \BBF_{n}^{+,(-1)^s} \ar[d]^{\mu} \\
\PPic \ar@{=}[r] & \PPic
}
$$
of stacks over $X_{\et}$ commute.  In particular for each object $\VV$
of $\VB_{r}(X)$, there's a canonical homomorphism of sheaves of groups
$\lpow{s}{} : \GL(\VV) \to \SSSim(\lpow{s}{\VV})$ on $X_{\et}$, making
the following diagram
$$
\xymatrix@C=44pt{
\GL(\VV) \ar[d]_{\det} \ar[r]^(.45){\lpow{s}{}} & \SSSim(\lpow{s}{\VV}) \ar[d]^{\mu} \\
\Gm      \ar@{=}[r]       & \Gm               \\
}
$$
of sheaves of groups on $X_{\et}$ commute.
\end{proposition}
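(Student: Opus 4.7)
The plan is to define the functor $\lpow{s}{}$ on objects and isomorphisms of $\VB_r(X)$ in a way that is manifestly compatible with pullback, verify that it lands in $\BF^{+,(-1)^s}_n(X)$, and then read off the stack-level and automorphism-sheaf-level statements. On objects, I would send a rank-$r$ vector bundle $\VV$ to the oriented bilinear form $(\exter{s}{\VV}, \wedge, \det\VV, \zeta_{\VV})$, with $\zeta_{\VV}$ the canonical trivialization of $\discs(\lpow{s}{\VV})$ supplied by Lemma~\ref{lem:disc_triv}. Three properties must be checked in order for this to define an object of $\BF^{+,(-1)^s}_n(X)$: regularity of $\wedge$ (a local computation on a basis, since on a free module of rank $r=2s$ the basis element $e_I$ for $I = \{i_1 < \cdots < i_s\}$ pairs nontrivially exactly with $e_{I^c}$, giving a perfect self-pairing); $(-1)^s$-symmetry of $\wedge$ (graded-commutativity of the exterior algebra together with $s^2 \equiv s \pmod{2}$); and triviality of the signed discriminant with the chosen orientation, which is exactly Lemma~\ref{lem:disc_triv}.

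On morphisms, I would send an isomorphism $\vp : \VV \to \VV'$ to the pair $(\exter{s}{\vp}, \det\vp)$. Naturality of $\wedge$ with respect to linear maps shows that this pair is a similarity between the middle exterior power forms of $\VV$ and $\VV'$. The crux, and in my view the main obstacle, is to show that it is an \emph{oriented} similarity, i.e., that $\zeta_{\VV'} \circ \discs(\exter{s}{\vp}, \det\vp) = \zeta_{\VV}$, since only this guarantees landing in $\BBF^{+,(-1)^s}$ rather than merely in the stack of regular bilinear forms with trivial signed discriminant. This is local on $X$, so I reduce to the case that $\VV$ and $\VV'$ are free and $\vp$ is represented by an element of $\GL_r$; the required identity then collapses to the $\GL_r$-equivariance of the map $\zeta$ already proved in Lemma~\ref{lem:disc_triv}. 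With this established, cartesianness of $\lpow{s}{}$ follows from the compatibility of exterior powers, determinants, and the wedge pairing with pullbacks $f : Y \to X$ of locally free modules, so the construction glues into a cartesian functor $\lpow{s}{} : \VVB_r \to \BBF_n^{+,(-1)^s}$ of stacks over $X_{\et}$. Commutativity of the first diagram is tautological, since the value line bundle of $\lpow{s}{\VV}$ is $\det\VV$ by construction.

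The ``in particular'' statement is then obtained by restricting to the automorphism sheaf of a single object: the sheaf of $\OO_X$-linear automorphisms of $\VV$ over $X_{\et}$ is $\GL(\VV)$, and the preceding construction sends it into the sheaf of oriented self-similarities $\SSSim(\lpow{s}{\VV})$. The second diagram commutes because the multiplier of $\lpow{s}{\vp}$ is $\det\vp$ by definition, so composition with $\mu : \SSSim(\lpow{s}{\VV}) \to \Gm$ recovers $\det : \GL(\VV) \to \Gm$. The remaining verifications — sheaf-theoretic compatibility on $X_{\et}$ and \'etale descent — are routine consequences of the functorial behavior of $\exter{s}{}$ and $\det$, so beyond the orientation-preservation step the proof is entirely formal.
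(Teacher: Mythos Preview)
Your proposal is correct and follows essentially the same approach as the paper: define the functor on objects via $(\lpow{s}{\VV},\zeta_{\VV})$ using Lemma~\ref{lem:disc_triv}, on morphisms via $(\exter{s}{\vp},\det\vp)$, identify orientation-preservation as the only nontrivial step (reducing it, as you do, to the $\GL_r$-equivariance of $\zeta$ already established in Lemma~\ref{lem:disc_triv}), and then observe that cartesianness and the commutativity of both diagrams are formal. Your write-up in fact spells out a bit more than the paper does on the orientation check, but the strategy is identical.
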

\begin{proof}
For each $U \to X$ in $X_{\et}$ define $\lpow{s} : \VB_r(U) \to
\BF_{n}^{+,(-1)^s}(U)$ on objects by sending $\VV$ to
$(\lpow{s}{\VV},\zeta_{\VV})$, where $\zeta_{\VV}$ is the orientation
defined in Lemma~\ref{lem:disc_triv}.  On morphisms, send an
$\OO_X$-isomorphism $\psi : \VV \to \VV'$ to the oriented similarity
$\Bigl(\exter{s}{\psi},\exter{r}{\psi},
\exter{n}{\bigl(\exter{s}{\psi}\bigr)}\tensor\bigl(\exter{r}{\psi\dual}\bigr)^{\tensor{\frac{1}{2}\binom{r}{s}}}\Bigr)$.
The only nontrivial thing to check is that this is indeed an oriented
similarity!  The fact that this gives rise to a cartesian functor
of stacks follows from its nature as a tensorial construction.  The
fact that the diagrams commute follows directly from the definition.
\end{proof}

\subsection{Some linear algebra}

Let $X$ be a scheme and $\VV \mapto{f} \NN$ be a morphism of $\OO_X$-modules.  
For $s \geq 1$, the map $\VV^{\tensor s} \to \NN \tensor \VV^{\tensor
(s-1)}$ defined on sections by
$$
v_0 \tensor \dotsm \tensor v_{s-1} \mapsto \sum_{j=0}^{s-1}
f(v_j)\tensor v_0 \tensor \dotsm \tensor \hat{v}_j \tensor
\dotsm \tensor v_{s-1} 
$$
descends to the \linedef{contraction} map $d^{(s-1)}f : \exter{s}{\VV} \to \NN
\tensor \exter{s-1}{\VV}$.  As usual, define $\exter{0}{\VV}=\OO_X$ so
that $d^{(0)}f=f$.
The composition
\begin{equation}
\label{eq:df_derivation}
\exter{s}{\VV} \mapto{d^{(s-1)}f} \NN \tensor \exter{s-1}{\VV}
\mapto{\id_{\NN} \tensor d^{(s-2)}f} 
\NN^{\tensor 2}\tensor \exter{s-2}{\VV}
\end{equation}
is zero.  
Hence for $r \geq 1$ we have an $r$-truncated
\linedef{Koszul complex}
$$
0 \to \exter{r}{\VV} \mapto{d^{(r-1)}f} \NN \tensor \exter{r-1}{\VV}
\to
\dotsm \to \NN^{\tensor(r-2)} \tensor \exter{2}{\VV}
\mapto{}
\NN^{\tensor (r-1)}\tensor {\VV} \mapto{\id_{\NN^{\tensor (r-1)}}\tensor f} \NN^{\tensor r} \to 0
$$
of $\OO_X$-modules.  If $\VV$ is a vector bundle of rank $r$ and $\NN$ is
a line bundle then the $r$-truncated
Koszul complex is denoted by $K(\VV,f)$; it is exact if $f$ is an epimorphism. 
Denoting by $\WW = \ker f$ and $j : \WW \to \VV$ the inclusion, we have a complex
\begin{equation}
\label{eq:almost}
0 \to \exter{s}{\WW} \mapto{\wedge^s j} \exter{s}{\VV} \mapto{d^{(s-1)}f} \NN
\tensor \exter{s-1}{\VV}
\end{equation}
of $\OO_X$-modules, which is exact under the conditions that $\VV$ is
a vector bundle, $\NN$ is a line bundle, and $f$ is an epimorphism.
(Such an $\NN$ will be called a \linedef{quotient line bundle} of
$\VV$.)  The following linear algebra lemma is well-known.

\begin{lemma}[Pascal's rule for vector bundles]
\label{lem:pascal}
Let 
$$
0 \to \WW \to \VV \mapto{f} \NN \to 0,
$$ 
be an exact sequence of vector bundles on $X$ with $\NN$ a line
bundle.  Then for any $s \geq 1$, the contraction map induces a short
exact sequence,
\begin{equation}
\label{eq:pascal}
0 \to \exter{s}{\WW} \to \exter{s}{\VV} \mapto{d^{(s-1)}f} \NN \tensor
\exter{s-1}{\WW} \to 0,
\end{equation}
of vector bundles on $X$.
\end{lemma}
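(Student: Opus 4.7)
The plan is to reduce to a Zariski-local computation in which the exact sequence splits, and then verify the asserted factorization and surjectivity by direct calculation on a local frame. The key enabling observation is that a surjection $\VV \to \NN$ onto a line bundle splits Zariski-locally: any local generator of $\NN$ lifts along $f$ to a section of $\VV$, yielding $\VV|_U \isom \WW|_U \oplus \NN|_U$ on a suitable open $U$. Since exactness of a sequence of coherent sheaves is a local property, this reduction is harmless.

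Two of the three pieces of exactness in \eqref{eq:pascal} are already in hand from \eqref{eq:almost}: the sequence is a complex, the map $\wedge^s j : \exter{s}{\WW} \to \exter{s}{\VV}$ is injective, and $\ker(d^{(s-1)}f) = \exter{s}{\WW}$. So the remaining tasks are to show that (i) $d^{(s-1)}f$ factors through the subbundle $\NN \tensor \exter{s-1}{\WW} \subset \NN \tensor \exter{s-1}{\VV}$, and (ii) the resulting map $\exter{s}{\VV} \to \NN \tensor \exter{s-1}{\WW}$ is surjective.

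In the local split model $\VV = \WW \oplus \NN$, pick a frame $e_1, \dotsc, e_{r-1}$ of $\WW$ together with a generator $n$ of $\NN$ (lifted to $\VV$). A basis of $\exter{s}{\VV}$ decomposes into pure $\WW$-wedges $e_{i_1} \wedge \dotsm \wedge e_{i_s}$, on which $d^{(s-1)}f$ vanishes because $f|_{\WW}=0$, and mixed wedges $e_{i_1} \wedge \dotsm \wedge e_{i_{s-1}} \wedge n$, on each of which only the last term of the contraction formula survives, producing $(-1)^{s-1} f(n) \tensor (e_{i_1} \wedge \dotsm \wedge e_{i_{s-1}})$. This element lies in $\NN \tensor \exter{s-1}{\WW}$, and as the $e_{i_j}$ vary it traces out a local frame of that subbundle, giving (i) and (ii) simultaneously.

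The one mild subtlety I anticipate is gluing the local factorizations in (i) into a bona fide global morphism, but this is automatic: since $\NN \tensor \exter{s-1}{\WW} \hookrightarrow \NN \tensor \exter{s-1}{\VV}$ is a monomorphism of $\OO_X$-modules, any two candidate factorizations of the same global morphism $d^{(s-1)}f$ must coincide on overlaps. I expect no serious obstacle beyond this bookkeeping; the whole argument ultimately rests on the fact that $\rk \NN = 1$ forces $\exter{k}{\NN} = 0$ for $k \geq 2$, which is precisely why only the two extremes of the usual filtration of $\exter{s}{\VV}$ by $\exter{p}{\WW} \tensor \exter{s-p}{\NN}$ survive.
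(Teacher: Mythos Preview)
Your proof is correct and complete, but it takes a different route from the paper's. You localize to a split situation and compute by hand on a frame, verifying directly that the image of $d^{(s-1)}f$ is exactly $\NN \tensor \exter{s-1}{\WW}$. The paper instead argues globally via the Koszul complex: since $K(\VV,f)$ is exact when $f$ is an epimorphism, the image of $d^{(s-1)}f$ coincides with the kernel of $\id_{\NN}\tensor d^{(s-2)}f$, and the degree-$(s-1)$ instance of \eqref{eq:almost} tensored with $\NN$ identifies that kernel with $\NN\tensor\exter{s-1}{\WW}$. Your approach is more elementary and self-contained---it does not invoke Koszul exactness as a black box, and your closing remark about the filtration by $\exter{p}{\WW}\tensor\exter{s-p}{\NN}$ makes the underlying reason transparent. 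The paper's approach is terser and stays global, but of course the exactness of the Koszul complex it cites is itself typically proved by exactly the kind of local splitting you carry out; in that sense you are unpacking what the paper takes for granted.
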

\begin{proof}
We only need to verify that the image of $d^{(s-1)}f$ in
$\NN\tensor\exter{s-1}{\VV}$ is isomorphic to
$\NN\tensor\exter{s-1}{\WW}$.  Under our hypotheses, the
Koszul complex (in particular, the complex \eqref{eq:df_derivation})
is exact, hence the image of $d^{(s-1)}f$
is isomorphic to the kernel of $\id_{\NN}\tensor
d^{(s-2)}f : \NN\tensor\exter{s-1}{\VV} \to \NN^{\tensor 2}\tensor
\exter{s-2}{\VV}$.  Tensoring the degree $s-1$ version of \eqref{eq:almost} by $\NN$, the
kernel of $\id_{\NN}\tensor
d^{(s-2)}f$ is seen to be isomorphic to $\NN\tensor\exter{s-1}{\WW}$, proving the lemma.
\end{proof}

Similarly, if $0 \to \NN' \to \VV \mapto{g} \WW' \to 0$ is an exact
sequence of vector bundles on $X$ with $\NN'$ a line bundle (such an $\NN'$ will be call a \linedef{sub-line bundle} of $\VV$), then there
is an induced short exact sequence
\begin{equation}
\label{eq:pascal_dual}
0 \to \NN' \tensor \exter{s-1}{\WW'} \to \exter{s}{\VV}
\mapto{\wedge^s g} \exter{s}{\WW'} \to 0,
\end{equation}
by applying Lemma~\ref{lem:pascal} to $\VV\dual$, then dualizing.

\begin{remark}
\label{rem:serre_splitting}
A trivial sub-line bundle $\OO_X$ of $\VV$ is also called a
\linedef{nonvanishing global section}.  If $X$ is any noetherian
affine scheme of (Krull) dimension $d$ then Serre's splitting theorem
guarantees a nonvanishing global section if $\VV$ has rank $> d$.  If
$X$ is a variety of dimension $d$ over a field and $\VV$ has rank $>
d$, then $\VV$ has a nonvanishing global section as soon as it is
generated by global sections (cf.~\cite[II Exer.\
8.2]{hartshorne:algebraic_geometry}).  Furthermore, if $X$ is
projective over a field, then by a theorem of Serre, some twist
$\VV(n)$ is always generated by global sections (cf.~\cite[II Thm.\
5.17]{hartshorne:algebraic_geometry}).  Thus for a projective
variety $X$ of dimension $d$ over a field, any vector bundle $\VV$
of rank $> d$ on $X$ has a sub-line bundle
of the form $\OO_X(-n)$ for $n \gg 0$.
\end{remark}

A bilinear form $(\EE,b,\LL)$ of rank $n=2m$ on $X$ is
\linedef{metabolic} if there exists a vector subbundle $\FF \to \EE$
(i.e.\ locally a direct summand) of rank $m$ such that the restriction of $b$
to $\FF$ is zero.  Any choice of such $\FF$ is called a
\linedef{lagrangian}.  For example, $\FF$ is a lagrangian of the
hyperbolic form $H_{\LL}^{\epsilon}(\FF)$.  An $\OO_X$-submodule $\FF
\to \EE$ is a lagrangian if and only if $\FF$ is the kernel of
$\adj{b,\FF} : \EE \to \HHom(\FF,\LL)$ given by the composition of
$\adj{b} : \EE \to \HHom(\EE,\LL)$ and the projection $\HHom(\EE,\LL)
\to \HHom(\FF,\LL)$.  Thus a lagrangian $\FF$ is also equivalent to an
isomorphism of short exact sequences
$$
\xymatrix{
0 \ar[r] & \FF \ar[d]^{\adj{b}|_{\FF}} \ar[r] & \EE
\ar[d]_{\adj{b}} \ar[r] & \EE/\FF \ar[d]^{\adj{b,\FF}} \ar[r] & 0 \\
0 \ar[r] & \HHom(\HHom(\EE/\FF,\LL),\LL) \ar[r] & \HHom(\EE,\LL) \ar[r] & \HHom(\VV,\LL) \ar[r] & 0  
}
$$
of vector bundles, where the bottom sequence is the $\LL$-dual of the
top sequence.  We will often refer to a self-dual short exact
sequence, as above, by the term \linedef{lagrangian}.  See
Balmer~\cite{balmer:derived_witt_groups},
\cite{balmer:triangular_witt_groups_I}, and
\cite{balmer:triangular_witt_groups_II} for the general notion of
metabolic form in the context of Grothendieck(-Witt) groups of
triangulated and exact categories with duality.

\begin{proposition}
\label{prop:getting_metabolics}
Let $\VV$ be a vector bundle of rank $r=2s$ and $\NN$, $\NN'$ be line
bundles on $X$.
\begin{enumerate}
\item For an exact sequence of vector bundles of the form 
$$
0 \to \WW \to \VV \mapto{f} \NN \to 0,
$$ 
the associated exact sequence~\eqref{eq:pascal} is a lagrangian of
$\lpow{s}{\VV}$.

\item For an exact sequence of vector bundles of the form
$$
0 \to \NN' \to \VV \mapto{g} \WW' \to 0,
$$ 
the associated exact sequence~\eqref{eq:pascal_dual} is a lagrangian
of $\lpow{s}{\VV}$.

\item If $\VV \isom \NN \oplus \WW$, then $\lpow{s}{\VV}$ is
isometric to the hyperbolic form $H_{\det \VV}^{(-1)^s}({\bigwedge}^s
\WW)$.
\end{enumerate}
\end{proposition}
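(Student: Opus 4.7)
The plan is to handle parts (1) and (2) by combining a rank count with an isotropy check, and part (3) by exhibiting an explicit isometry through the canonical decomposition of $\exter{s}{(\NN\oplus\WW)}$.

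For (1), Pascal's identity $\binom{2s-1}{s} = \frac{1}{2}\binom{2s}{s}$ shows that the sub-bundle $\exter{s}{\WW} \subset \exter{s}{\VV}$ of \eqref{eq:pascal} has half the rank of $\exter{s}{\VV}$. Isotropy is automatic since the wedge of two sections of $\exter{s}{\WW}$ lies in $\exter{2s}{\WW} = 0$ (as $\rank \WW = 2s-1$). Because the wedge form is regular of even rank, any such isotropic sub-bundle of half the rank is forced to coincide with its orthogonal: the surjection $\EE \to \HHom(\FF,\LL)$ obtained by composing $\adj{\wedge}$ with restriction has kernel $\FF^\perp$ of rank $m$, and any inclusion of a rank-$m$ direct summand into another rank-$m$ direct summand inside $\EE$ must be an isomorphism. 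This yields the lagrangian structure on \eqref{eq:pascal}. Part (2) follows analogously: Pascal's identity $\binom{2s-1}{s-1} = \frac{1}{2}\binom{2s}{s}$ gives the rank, and after a local splitting $\VV \cong \NN' \oplus \WW'$, isotropy of $\NN' \tensor \exter{s-1}{\WW'}$ follows from the fact that $n \wedge n = 0$ in $\exter{2}{\NN'}$.

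For (3), the exterior algebra decomposition, using $\exter{i}{\NN} = 0$ for $i \geq 2$, produces
$$
\exter{s}{(\NN \oplus \WW)} \cong \exter{s}{\WW} \oplus \bigl(\NN \tensor \exter{s-1}{\WW}\bigr).
$$
Since $\rank \WW = 2s-1$, the middle wedge pairing $\exter{s-1}{\WW} \tensor \exter{s}{\WW} \to \det \WW$ is perfect, so tensoring by $\NN$ and using $\det\VV = \NN \tensor \det\WW$ identifies the second summand canonically with $\HHom(\exter{s}{\WW}, \det\VV)$. The two diagonal blocks of the wedge form on $\exter{s}{\VV}$ vanish by the isotropy computations of (1) and (2), while the off-diagonal cross term, after a standard graded-commutativity check, unwinds to $\phi_1(w_2) + (-1)^s \phi_2(w_1)$, which is precisely the pairing defining $H_{\det\VV}^{(-1)^s}(\exter{s}{\WW})$.

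I expect the main obstacle to be the sign bookkeeping in (3): verifying that the graded-commutativity of the wedge, combined with the chosen identification $\det\VV \cong \NN \tensor \det\WW$, produces the $(-1)^s$-symmetric hyperbolic form on the nose rather than a twist. Conceptually everything else is forced by the rank count and nondegeneracy, so the remainder is a routine but attentive symbol calculation.
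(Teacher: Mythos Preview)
Your proposal is correct and carries out precisely the linear algebra exercise the paper leaves to the reader. The paper's one-line proof only points to ``the description of lagrangians as self-dual short exact sequences,'' and your approach---verifying that $\exter{s}{\WW}$ (resp.\ $\NN'\tensor\exter{s-1}{\WW'}$) is an isotropic sub-bundle of half rank, then invoking regularity to force $\FF=\FF^\perp$---is exactly one way to confirm that self-duality; the alternative implicit in the paper's hint would be to dualize \eqref{eq:pascal} directly using the perfect pairing $\exter{s}{\WW}\tensor\exter{s-1}{\WW}\to\det\WW$ together with $\det\VV\cong\NN\tensor\det\WW$, but this amounts to the same computation.
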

\begin{proof}
A linear algebra exercise using the description of lagrangians
as self-dual short exact sequences.
\end{proof}

\begin{corollary}
\label{cor:getting_metabolics}
If $\VV$ has a sub- or quotient line bundle then $\lpow{s}{\VV}$ is
metabolic.
\end{corollary}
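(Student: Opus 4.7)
The plan is to reduce the corollary immediately to Proposition \ref{prop:getting_metabolics}; there is nothing substantive beyond checking that the hypotheses of parts (1) and (2) of that proposition are satisfied in each of the two cases.

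Suppose first that $\VV$ admits a quotient line bundle, i.e., an epimorphism $f : \VV \to \NN$ with $\NN$ a line bundle. Since $\NN$ is locally free of rank one and $\VV$ is locally free of rank $r = 2s$, the kernel $\WW = \ker f$ is locally a direct summand and hence a vector bundle of rank $r-1$, producing a short exact sequence $0 \to \WW \to \VV \mapto{f} \NN \to 0$ of vector bundles of the form required by part (1) of Proposition \ref{prop:getting_metabolics}. That proposition then exhibits the Pascal sequence \eqref{eq:pascal} as a lagrangian of $\lpow{s}{\VV}$, so the form is metabolic by definition.

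Dually, suppose $\VV$ admits a sub-line bundle $\NN'$; by the definition introduced just before \eqref{eq:pascal_dual}, $\NN'$ is locally a direct summand of $\VV$, so the quotient $\WW' := \VV/\NN'$ is a vector bundle of rank $r-1$. This yields a short exact sequence $0 \to \NN' \to \VV \to \WW' \to 0$ of vector bundles of the form required by part (2) of Proposition \ref{prop:getting_metabolics}, which then exhibits the sequence \eqref{eq:pascal_dual} as a lagrangian of $\lpow{s}{\VV}$.

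There is no real obstacle here: the geometric content — that the Pascal-type sequences \eqref{eq:pascal} and \eqref{eq:pascal_dual} are self-dual in the sense required to present a lagrangian — has already been carried out in the proof of Proposition \ref{prop:getting_metabolics}, and the corollary is a formal two-case consequence. The only minor point worth flagging is the need to invoke the ``locally a direct summand'' part of the definition of sub-line bundle, without which the sequence in the second case would not consist of vector bundles.
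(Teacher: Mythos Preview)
Your proposal is correct and matches the paper's approach exactly: the paper states this as an immediate corollary of Proposition~\ref{prop:getting_metabolics} with no separate proof, and your argument simply unpacks that implication by invoking parts (1) and (2) in the two respective cases. The remark about needing the ``locally a direct summand'' convention for sub-line bundles is apt but already built into the paper's definitions.
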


The statement of Corollary~\ref{cor:getting_metabolics} can be
also seen indirectly as a consequence of the Whitney formula 
for the Euler class in
Grothendieck--Witt theory together with explicit formulas 
for its
computation (cf.\ Proposition~\ref{prop:euler_properties}).  We will
examine in further detail the connection between middle exterior power forms and the Euler class in \S\ref{sec:Euler_classes}.

As a consequence, we give a generalization to line bundle-valued forms
over schemes of a result of
Knus~\cite[V~Prop.~5.1.10]{knus:quadratic_hermitian_forms}.  
A line bundle-valued quadratic form of even rank and trivial
discriminant has an associated \linedef{refined Clifford invariant} in
$\Het^2(X,\muu_2)$ constructed in \cite[\S2.8]{auel:clifford} (denoted
by $gc^+$).  Two bilinear forms are \linedef{projectively similar} if
they are similar up to tensoring by a regular line bundle-valued
bilinear form of rank~1.

\begin{corollary}
\label{cor:knus}
Let $X$ be a scheme with 2 invertible and with the property that any
vector bundle of rank 4 has a sub- or quotient line bundle.  Then any
line bundle-valued symmetric bilinear form of rank 6 over $X$ with
trivial discriminant and
trivial refined Clifford invariant is
metabolic.
\end{corollary}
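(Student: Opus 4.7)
The plan is to use the exceptional isomorphism $\Dynkin{A}_3=\Dynkin{D}_3$ to realize $(\EE,b,\LL)$, up to projective similarity, as the middle exterior power form $\lpow{2}{\VV}$ of some rank-4 vector bundle $\VV$ on $X$; the hypothesis on $X$ together with Corollary~\ref{cor:getting_metabolics} then forces metabolicity.

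More precisely, the projective similarity class of a regular rank-6 symmetric bilinear form with trivial signed discriminant defines a torsor on $X_{\et}$ under the projective quotient of $\SSim_{3,3}^{+,+1}$. By Theorem~\ref{thm:LGrass_D3}, this projective group is canonically isomorphic to $\PGL_4$, so the data may be viewed as a $\PGL_4$-torsor. The obstruction to lifting it along $\GL_4\to\PGL_4$ to a rank-4 vector bundle $\VV$ is the Brauer class of this torsor in $\Het^2(X,\Gm)$, and the key cohomological point is that this class coincides, under the exceptional isomorphism, with the refined Clifford invariant in $\Het^2(X,\muu_2)\subset\Het^2(X,\Gm)$. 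By hypothesis this invariant vanishes, so a lift $\VV\in\VB_4(X)$ exists.

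Given such $\VV$, the cartesian functor of Proposition~\ref{prop:midext_functor} produces an isomorphism
\[
(\EE,b,\LL) \isom \lpow{2}{\VV} \tensor (\MM,\beta,\NN)
\]
for some regular rank-1 line bundle-valued bilinear form $(\MM,\beta,\NN)$. The hypothesis on $X$ supplies $\VV$ with a sub- or quotient line bundle; Corollary~\ref{cor:getting_metabolics} then yields a lagrangian $\FF\subset\exter{2}{\VV}$ (constructed explicitly as in Proposition~\ref{prop:getting_metabolics}); and the tensor product $\FF\tensor\MM$ is a lagrangian of $(\EE,b,\LL)$, so this form is metabolic.

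The principal obstacle is the cohomological matching in the second paragraph --- identifying the Brauer obstruction for the $\PGL_4$-torsor with the refined Clifford invariant. This is the cohomological incarnation of $\Dynkin{A}_3=\Dynkin{D}_3$ and should follow from tracing the construction of the refined Clifford invariant through the isomorphism $\PSSim_{3,3}^{+,+1}\isom\PGL_4$ coming from Theorem~\ref{thm:LGrass_D3}. The remainder is formal, relying on the functoriality of $\lpow{2}{}$ and the trivial observation that a lagrangian tensored by a regular rank-1 form is again a lagrangian.
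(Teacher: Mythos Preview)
Your strategy matches the paper's: both argue that a rank-6 form with trivial discriminant and trivial refined Clifford invariant is projectively similar to $\lpow{2}{\VV}$ for some rank-4 bundle $\VV$, then invoke the hypothesis on $X$ together with Corollary~\ref{cor:getting_metabolics}. The packaging of the key step differs. The paper cites external results directly: by Bichsel--Knus any such form is isometric to a reduced pfaffian form, and by \cite[Prop.~2.21]{auel:clifford} a reduced pfaffian of rank 6 with trivial refined Clifford invariant is projectively similar to some $\lpow{2}{\VV}$. Your torsorial argument is a legitimate rephrasing, but two adjustments are in order. First, the group-theoretic input you need is Proposition~\ref{prop:A3=D3} (the $\muu_2$-isogeny $\GL_4 \to \GSOrth(\lpow{2}{\OO_X^4})$), not Theorem~\ref{thm:LGrass_D3}, which concerns the lagrangian grassmannian rather than the groups. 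Second, the detour through $\PGL_4$ is unnecessary and slightly lossy (the map $\Het^2(X,\muu_2) \to \Het^2(X,\Gm)$ need not be injective): working directly with the coboundary $\Het^1(X,\GSOrth_{3,3}) \to \Het^2(X,\muu_2)$ of Proposition~\ref{prop:A3=D3} --- which the paper later identifies with the refined Clifford invariant in the Remark following that proposition --- produces $\VV$ with $(\EE,b,\LL)$ oriented \emph{similar}, not merely projectively similar, to $\lpow{2}{\VV}$. What you flag as the ``principal obstacle'' is exactly the content of the reference \cite[Prop.~2.21]{auel:clifford} the paper invokes.
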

\begin{proof}
Any regular line bundle-valued symmetric bilinear form $(\EE,q,\LL)$
of rank 6 with trivial discriminant is isometric to a reduced pfaffian
form (see Bichsel--Knus~\cite[Thm.\
5.2]{bichsel_knus:values_line_bundles}).  Any reduced pfaffian form of
rank 6 with trivial refined Clifford invariant is projectively similar
to $\lpow{2}{\VV}$ for some $\VV$ of rank 4 (see
\cite[Prop.~2.21]{auel:clifford}).  The hypotheses and
Corollary~\ref{cor:getting_metabolics} imply that $\lpow{2}{\VV}$ is
metabolic, hence $(\EE,q,\LL)$ is metabolic.
\end{proof}

By Remark~\ref{rem:serre_splitting}, any $X$ of dimension $\leq 3$
that is either noetherian affine or projective over a field satisfies
the hypotheses of Corollary~\ref{cor:knus}. 

\begin{remark}
For a general vector bundle $\VV$, let $p : \P(\VV) = \PProj S(\VV)
\to X$ be the associated projective bundle.  Due to the universal
quotient line bundle $p\pullback\VV \to \OO_{\P(\VV)}(1)$, the form
$p\pullback (\lpow{s}{\VV})$ is always metabolic, with a canonical
choice of lagrangian given by Proposition~\ref{prop:getting_metabolics}.
Thus any section $\sig : X \to \P(\VV)$ of $p$ (i.e.\ $\sig \in
\P(\VV)(X)$) 
gives rise to a lagrangian of $\lpow{s}{\VV} = \sig\pullback
p\pullback (\lpow{s}{\VV})$ via pullback.
But each $\sig \in \P(\VV)(X)$ corresponds to
a quotient line bundle $\VV = \sig\pullback p\pullback \VV \to
\sig\pullback \OO_{\P(\VV)}(1)$, 
providing another way
of understanding Proposition~\ref{prop:getting_metabolics}, see \S\ref{subsec:Lagrangian_grassmannians} for
more details. In
particular, $\lpow{s}{\VV}$ is metabolic if $\P(\VV)(X) \neq
\varnothing$.  
\end{remark}

\subsection{Lagrangian grassmannian}
\label{subsec:Lagrangian_grassmannians}

The lagrangian grassmannian (or grassmannian of maximal isotropic
subspaces) of a bilinear form is a well-studied object.  Our
perspective is to consider these spaces as moduli functors and as
projective homogeneous space fibrations.  

A \linedef{polarization} on a scheme $X$ is an isomorphism class of
$\OO_X$-module $\FF$.  A morphism $(f,\vp) : (X,\FF) \to (Y,\GG)$ of
polarized schemes is a morphism $f : X \to Y$ of schemes together with
a morphism of $\OO_Y$-modules $\vp : \GG \to f\pushforward\FF$.

Let $(\EE,b,\LL)$ be a regular $\epsilon$-symmetric bilinear form of
even rank $n=2m$ on $X$.  Denote by $\LGrass(\EE,b,\LL)$ its moduli
space of lagrangians, called the \linedef{lagrangian grassmannian},
and $\TT$ the universal lagrangian subbundle.  The polarized $X$-scheme
$(\LGrass(\EE,b,\LL),\TT)$ represents the functor
$$
u : U \to X \quad \mapsto \quad \bigl\{ \text{lagrangians $\FF \mapto{\vp} u\pullback\EE$ of $u\pullback(\EE,b,\LL)$} \bigr\}
$$
where $\FF \mapto{\vp} u\pullback\EE$ and $\FF' \mapto{\vp'}
u\pullback\EE$ are considered equivalent if and only if there exists
an $\OO_U$-module isomorphism $\psi : \FF \to \FF'$ such that $\vp =
\vp' \circ \psi$.  Note that if $(\EE,b,\LL)$ is not a split
metabolic, then an isomorphism $\FF \to \FF'$ of lagrangians cannot
necessarily be extended to an isometry of $(\EE,b,\LL)$.

The stabilizer subgroup $\PPP_{\FF} = \PPP_{\FF}(\EE,b,\LL) \to
\SSim(\EE,b,\LL)$ of any lagrangian $\FF \to \EE$ is a parabolic
subgroup.
The morphism $\LGrass(\EE,b,\LL) \to X$ is a projective homogeneous
space for the group scheme $\SSim(\EE,b,\LL)$, i.e.\ a moduli space of
parabolic subgroups of $\SSim(\EE,b,\LL)$ of a given type.  It has
geometrically integral fibers when $\epsilon=-1$ (the $\Dynkin{C}_m$
case).  When $\epsilon=1$ (the $\Dynkin{D}_m$ case), it's Stein
factorization is of the form $\LGrass(\EE,b,\LL) \mapto{\pi} Z
\mapto{f} X$ where $\pi$ is smooth projective and with geometrically
integral fibers and $f$ is the \'etale double cover of $X$ given by
the signed discriminant $\discs(\EE,b,\LL)$.  Indeed, by going to
fibers over $X$, this is a consequence of the corresponding fact over
fields; see \cite[\S85]{elman_karpenko_merkurjev}) or
\cite[Prop.~3.3]{hassett_varilly:K3} for a more global argument.
When the signed discriminant is trivial, $\LGrass(\EE,b,\LL)$ has two
connected components over $X$, each of which is a $\SSSim(\EE,b,\LL)$ orbit.
In this case, two lagrangians $\FF \to \EE$ and $\FF' \to \EE$
correspond to points in the same connected component if and
only if $\dim_{\kappa(x)}(\FF({x}) \cap \FF'({x})) \equiv m \bmod 2$
for each point $x$ of $X$, where $\FF(x)$ is the fiber over the
residue field $\kappa(x)$, see Bourbaki \cite[\S5 Exer.\
18d]{bourbaki:quadratic}, Mumford
\cite{mumford:theta_characteristics}, or
Fulton~\cite{fulton:flag_bundles}.

Recall that for a vector bundle $\VV$ of rank $r$ on $X$, if
$\OO_{\P(\VV)/X}(1)$ is the universal quotient line bundle on $\P(\VV) \to X$, then the
polarized $X$-scheme $(\P(\VV),\OO_{\P(\VV)/X}(1))$ represents the moduli functor
$$
u : U \to X \quad \mapsto \quad \bigl\{ \text{quotient line bundles}~ u\pullback\VV \mapto{f}
\NN \bigr\}
$$
where $u\pullback\VV \mapto{f} \NN$ and $u\pullback\VV \mapto{f'}
\NN'$ are considered equivalent if and only if there exists an
isomorphism $\mu : \NN \to \NN'$ such that $f' = \mu \circ f$.  By
dualizing, we can view $\P(\VV\dual)$ as representing the moduli space
of sub-line bundles of $\VV$.

The projective bundle $\P(\VV) \to X$ is a projective homogeneous space for the
reductive group scheme $\GL(\VV)$ of type $\Dynkin{A}_{r-1}$.  The
stabilizer subgroup $\PPP_{\WW} = \PPP_{\WW}(\VV) \to \GL(\VV)$ of any
vector subbundle $\WW \to \VV$ with line bundle quotient is a parabolic
subgroup.

\begin{theorem}
\label{thm:map_to_LGrass}
Let $X$ be a scheme with 2 invertible and $\VV$ be a vector bundle of
even rank $r=2s$ on $X$.  
There exist canonical $X$-morphisms of schemes
$$
\Phi_{\VV}  : \P(\VV) \to \LGrass(\lpow{s}{\VV}), \qquad
\Phi_{\VV}'  : \P(\VV\dual) \to \LGrass(\lpow{s}{\VV}),
$$
which realize, on the level of moduli functors, the maps
\begin{align*}
\bigl( 0 \to \WW \to \VV \mapto{f} \NN \to 0 \bigr) \quad & \mapsto
\quad \bigl( 0 \to \exter{s}{\WW} \to \exter{s}{\VV} \mapto{df}
\NN \tensor \exter{s-1}{\WW} \to 0 \bigr) \\
\bigl( 0 \to \NN' \to \VV \mapto{g} \WW' \to 0 \bigr) \quad & \mapsto
\quad \bigl( 0 \to \NN' \tensor \exter{s-1}{\WW'} \to
\exter{s}{\VV} \mapto{\wedge^s g} \exter{s}{\WW'} \to 0 \bigr)
\end{align*}
defined by
Proposition~\ref{prop:getting_metabolics}, 
respectively (here $\NN$ and $\NN'$ are line bundles).  Furthermore, $\Phi_{\VV}$ and
$\Phi_{\VV}'$ map to different connected components over $X$ if and only if $s$
is a power of 2.  
\end{theorem}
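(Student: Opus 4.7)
The plan is to construct the two morphisms using the moduli descriptions of $\P(\VV)$, $\P(\VV\dual)$, and $\LGrass(\lpow{s}{\VV})$, and then settle the component question by a fiberwise calculation reducing to the split case $\VV \isom \NN \oplus \WW$.

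For $\Phi_{\VV}$, I would start from the universal short exact sequence
\[
0 \to \WW \to \pi\pullback \VV \to \OO_{\P(\VV)/X}(1) \to 0
\]
on $\pi : \P(\VV) \to X$ and apply Lemma~\ref{lem:pascal} to obtain the short exact sequence
\[
0 \to \exter{s}{\WW} \to \exter{s}{\pi\pullback\VV} \mapto{d^{(s-1)}f} \OO_{\P(\VV)/X}(1) \tensor \exter{s-1}{\WW} \to 0,
\]
which by Proposition~\ref{prop:getting_metabolics}(1) is a lagrangian of $\pi\pullback \lpow{s}{\VV}$. The moduli description of the lagrangian grassmannian then produces the desired $X$-morphism $\Phi_{\VV}$, which realizes the first assignment on moduli by construction. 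I would define $\Phi_{\VV}'$ analogously on $\P(\VV\dual)$ using the universal sub-line bundle of $\VV$ and Proposition~\ref{prop:getting_metabolics}(2).

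The component statement exploits the canonical trivialization $\zeta_{\VV}$ of $\discs(\lpow{s}{\VV})$ from Lemma~\ref{lem:disc_triv}: the Stein factorization of $\LGrass(\lpow{s}{\VV}) \to X$ has the form $\LGrass(\lpow{s}{\VV}) \to X \sqcup X \to X$, and the two sheets are distinguished by the pointwise parity criterion recalled in \S\ref{subsec:Lagrangian_grassmannians}: two lagrangians $\FF, \FF' \subset \EE$ land over the same sheet iff $\dim_{\kappa(x)}(\FF(x) \cap \FF'(x)) \equiv m \bmod 2$ at every $x \in X$, where $m = \tfrac{1}{2}\binom{2s}{s}$. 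Since $\P(\VV)$ and $\P(\VV\dual)$ have geometrically connected fibers, the maps $\Phi_{\VV}$ and $\Phi_{\VV}'$ factor through $X$-sections of this double cover, so whether they hit the same or opposite sheets can be checked fiberwise. Étale-locally on $X$ one has a splitting $\VV \isom \NN \oplus \WW$ with $\NN$ a line bundle serving simultaneously as a quotient and a sub of $\VV$. The two corresponding lagrangians produced by Proposition~\ref{prop:getting_metabolics} are $\exter{s}{\WW}$ (via $\Phi_{\VV}$) and $\NN \tensor \exter{s-1}{\WW}$ (via $\Phi_{\VV}'$), and these are precisely the two summands of the canonical decomposition $\exter{s}{\VV} = \exter{s}{\WW} \oplus (\NN \tensor \exter{s-1}{\WW})$. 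Their fiberwise intersection is therefore zero, so $\Phi_{\VV}$ and $\Phi_{\VV}'$ land in the same component iff $m$ is even.

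The main obstacle is the combinatorial identification of when $m = \tfrac{1}{2}\binom{2s}{s}$ is odd, equivalently $v_2\bigl(\binom{2s}{s}\bigr) = 1$. By Kummer's theorem, $v_2\bigl(\binom{2s}{s}\bigr)$ equals the number of carries in the base-$2$ addition $s+s$, which is the number of $1$'s in the binary expansion of $s$. Hence $v_2\bigl(\binom{2s}{s}\bigr) = 1$ precisely when $s$ has a single binary digit equal to $1$, i.e., when $s$ is a power of $2$, which gives the stated equivalence.
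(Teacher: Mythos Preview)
Your proposal is correct and follows essentially the same route as the paper: construct $\Phi_{\VV}$ and $\Phi_{\VV}'$ from functoriality of the lagrangian construction plus representability, then decide the component question via the fiberwise parity criterion for lagrangians together with the combinatorial fact that $\tfrac{1}{2}\binom{2s}{s}$ is odd exactly when $s$ is a power of $2$ (the paper cites this from \cite{book_of_involutions}, while you derive it via Kummer's theorem). One small omission to patch: your Stein-factorization claim $\LGrass(\lpow{s}{\VV}) \to X \sqcup X$ presupposes the symmetric case $s$ even; the paper first disposes of $s$ odd by noting that the alternating lagrangian grassmannian already has geometrically connected fibers over $X$.
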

\begin{proof}
Since the Koszul complex is functorial, the maps defined by
Proposition~\ref{prop:getting_metabolics} define morphisms of moduli functors.
By Yoneda's lemma, these are
representable by $X$-morphism of the representing moduli schemes.

Now we prove the final claim.  If $s$ is odd (the $\Dynkin{C}_m$ case)
then $\LGrass(\lpow{s}{\VV}) \to X$ is connected, so we need only
consider the case when $s$ is even (the $\Dynkin{D}_m$ case). The
fibral criterion for two lagrangians to be in the same connected
component reduces us to linear algebra considerations.  Let $V$
be a vector space of dimension $r=2s$ (with $s$ even) over a field
$k$, let $W, W' \subset V$ be subspaces of codimension 1, and choose a
splitting $V = W' \oplus L'$.  
The
codimension of $W \cap W' \subset V$ is either 1 or 2.  In the former case,
$W=W'$ and we have that $\exter{s}{W} \cap L' \tensor \exter{s-1}{W'} = \{0\}$
since $W \cap L' = \{0\}$.  As the rank of an
intersection of lagrangians has a well-defined parity, we see that
$\dim_k\bigl(\exter{s}{W} \cap L' \tensor \exter{s-1}{W'}\bigr)$ is always even.
Finally, we use the fact (cf.\
\cite[II~Lemma~10.29]{book_of_involutions}) that $\dim_k (\lpow{s}{V}) = \binom{r}{s}
\equiv 2 \bmod 4$ if and only if $s$ is a power of 2.  
Thus
$\dim_k\bigl(\exter{s}{W} \cap L' \tensor \exter{s-1}{W'}\bigr)
\not\equiv \frac{1}{2}\binom{r}{s} \bmod 2$ if and only if $s$ is a
power of 2 and the claim follows by appealing to the above fibral criterion.
\end{proof}

\section{The Euler class in Grothendieck--Witt theory}
\label{sec:Euler_classes}

Let $X$ be a scheme with 2 invertible.  Modeled on classical
treatments of the Koszul complex (cf.~\cite[\S1.6]{bruns_herzog:cohen-macauley_rings}), Balmer, Gille, and
Nenashev~\cite{balmer_gille:koszul}, \cite{gille_nenashev:pairings}
introduce the \linedef{Euler class} $\euler{\VV} \in
GW^{r}(X,\det\VV\dual)$ of a vector bundle $\VV$ of rank $r$ on $X$.
Also see Fasel \cite[\S2.4]{fasel:degree}. 
Here we mostly follow the treatment in
Fasel--Srinivas~\cite[\S2.4]{fasel_srinivas:chow-witt}.  

For a scheme $X$ with 2 invertible and $\NN$ a line bundle on $X$, the
$r$-shifted $\NN$-twisted derived Grothendieck--Witt groups
$GW^r(X,\NN)$ were introduced by
Balmer~\cite{balmer:derived_witt_groups},
\cite{balmer:triangular_witt_groups_I},
\cite{balmer:triangular_witt_groups_II} and
Walter~\cite{walter:GW_groups_triangulated_categories}.  For $\epsilon
\in \{\pm 1\}$, the classical Grothendieck--Witt group
$GW^{\epsilon}(X,\NN)$ of regular $\epsilon$-symmetric bilinear forms
on $X$ was introduced by Knebusch~\cite{knebusch}.  The map taking a
regular $\epsilon$-symmetric bilinear form $(\EE,b,\NN)$ to the
complex $\EE[s]$ consisting of $\EE$ concentrated in degree $s$
together with the natural $2s$-shifted $\NN$-twisted symmetric
isomorphism induced by $b$ 
gives rise to a group
isomorphism $GW^{(-1)^s}(X,\NN) \to GW^{2s}(X,\NN)$ for any $s$,
see Walter~\cite[Thm.~6.1]{walter:GW_groups_triangulated_categories}.

\subsection{Euler classes and middle exterior power forms}
\label{subsec:General_properties}

Let $\VV$ be a vector bundle of rank $r$ and $\NN$ a line bundle on a
scheme $X$. Given an $\OO_X$-module morphism $\VV \mapto{f} \NN$, consider the (twisted) Koszul complex
$K(\VV,f)$
$$
0 \to \bigl(\NN\dual \bigr)^{\tensor (r-1)} \tensor \exter{r}{\VV} \to
\dotsm \to \NN\dual \tensor \exter{2}{\VV} \mapto{d^{(1)}f}
\VV \mapto{f} \NN \to 0
$$  
of vector bundles on $X$, where $\NN$ is in degree 0.  Then $\NN\dual
\tensor K(\VV,f)$ is isomorphic to a standard Koszul complex
$K(\NN\dual\tensor\VV,\ev \circ (\id\tensor f))$ associated to the
cosection $\NN\dual\tensor\VV \to \OO_X$.  
The perfect
pairings $\exter{j}{\VV}\tensor\exter{r-j}{\VV}\to\det\VV$ together
with certain sign conventions give rise to a symmetric isomorphism of
complexes $\Phi : K(\VV,f) \to K(\VV,f)\duality$, where $(-)\duality =
\HHom\bigl(-,(\NN\dual)^{\tensor (r-2)}\tensor\det\VV\bigr)[r]$, see
Fasel--Srinivas~\cite[\S2.4]{fasel_srinivas:chow-witt}
or Fasel \cite[\S2.4]{fasel:degree}.  Thus the pair
$(K(\VV,f),\Phi)$ gives rise to a class in $GW^r(X,(\NN\dual)^{\tensor
(r-2)}\tensor\det\VV)$.

Consider the associated affine bundle $p : \aff{\VV} = \SSpec\, S(\VV)
\to X$
and
its zero section $s: X \to \aff{\VV}$.  Then
$p\pullback\VV\dual$ has a canonical ``evaluation'' morphism
$f : p\pullback\VV\dual \to \OO_{\aff{\VV}}$ with cokernel
$s{}\pushforward \OO_X$.
There's an associated Koszul complex $K(p\pullback\VV\dual, f)$
$$
0 \to \exter{r}{p\pullback\VV\dual} \to \dotsm \to
\exter{2}{p\pullback\VV\dual} \mapto{d^{(1)}f} p\pullback\VV\dual
\mapto{f} \OO_{\aff{\VV}} \to 0
$$
of vector bundles on $\aff{\VV}$. 
The canonical morphism of sheaves $s\sheafsharp : \OO_{\aff{\VV}} \to
s{}\pushforward \OO_X$ defines an isomorphism $K(p\pullback\VV\dual,f)
\to s{}\pushforward\OO_X$ in the derived category.  The zero section
defines a pullback map
$$
s\pullback :
GW^r(\aff{\VV},p\pullback\!\det\VV\dual) \to GW^r(X,\det\VV\dual)
$$ 
on Grothendieck--Witt groups.

\begin{definition}
Let $\VV$ be a vector bundle of rank $r$ on $X$.  The \linedef{Euler
class} $\euler{\VV} \in GW^r(X,\det\VV\dual)$ is defined to be
$s\pullback (K(p\pullback\VV\dual,f),\Phi)$.  The \linedef{middle
exterior power class} $\midext{\VV} \in GW^r(X, \det\VV)$ is defined
to be the Grothendieck--Witt group class $\lpow{s}{\VV}[r]$ 
if $r=2s$ is even and to be 0 if
$r$ is odd.
\end{definition}

\begin{proposition}[{Fasel--Srinivas~\cite[Prop.~14, 21]{fasel_srinivas:chow-witt}}]
\label{prop:euler_properties}
Let $X$ be a scheme with 2 invertible and $\VV$ a vector bundle of
rank $r$ on $X$.
\begin{enumerate}
\item (Explicit formula for Euler classes) In $GW^r(X,\det\VV\dual)$,
we have the following formula
$$
\euler{\VV} = 
\langle(-1)^{s(s-1)/2}\rangle\tensor\midext{\VV\dual} + 
H_{\det\VV\dual} \left( \sum_{j=0}^{\lfloor ({s-1})/{2}\rfloor}
(-1)^j\bigl[\exter{j}{\VV\dual}\bigr] \right).
$$

\item (Whitney sum formula for Euler classes) If $0 \to \WW \to \VV
\to \NN \to 0$ is an exact sequence of vector bundles, then
$\euler{\VV} = \euler{\WW} \cdot \euler{\NN}$ under the multiplication
in Grothendieck--Witt groups induced by $\det\WW\dual \tensor
\det\NN\dual \to \det\VV\dual$.
\end{enumerate}
\end{proposition}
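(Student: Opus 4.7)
The plan is to carry out the concrete computation indicated in Fasel--Srinivas~\cite[\S2.4]{fasel_srinivas:chow-witt}. The key observation for part~(1) is that the canonical cosection $f : p\pullback\VV\dual \to \OO_{\aff{\VV}}$ vanishes along the zero section of $\aff{\VV}$. Consequently, the pulled-back Koszul complex $s\pullback K(p\pullback\VV\dual, f)$ has all differentials zero, and decomposes as a direct sum of line bundle terms $\exter{j}{\VV\dual}$ concentrated in a single degree, for $j = 0,1,\dotsc,r$. The first step is thus to write out this decomposition together with the induced symmetric structure inherited from $\Phi$.

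The second step is to analyze the restriction of $\Phi$ to each summand. The pairing matches $\exter{j}{\VV\dual}$ with $\exter{r-j}{\VV\dual}$ via the wedge product valued in $\det\VV\dual$. For $j \neq r-j$, the resulting summand with its pairing is a hyperbolic object in the shifted Grothendieck--Witt group, and identification through the $r$-fold shift isomorphism $GW^{\epsilon}(X,\NN) \isom GW^{2s}(X,\NN)$ produces the class $(-1)^j H_{\det\VV\dual}\bigl([\exter{j}{\VV\dual}]\bigr)$, the sign $(-1)^j$ recording the degree in which the summand sits. For the middle term $j=s$ (when $r=2s$), the induced pairing on $\exter{s}{\VV\dual}$ is the middle exterior power form $\lpow{s}{\VV\dual}$ up to a sign coming from the Koszul sign convention; bookkeeping through the duality $(-)\duality = \HHom(-,\det\VV\dual)[r]$ yields the factor $\langle(-1)^{s(s-1)/2}\rangle$. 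Summing these contributions for $j=0,\dotsc,\lfloor(s-1)/2\rfloor$ together with the middle term produces the claimed formula.

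For part~(2), the tool is the tensor decomposition of Koszul complexes. The inclusion $p\pullback\NN\dual \subset p\pullback\VV\dual$ (obtained from dualizing $0 \to \WW \to \VV \to \NN \to 0$ and pulling back to $\aff{\VV}$) induces a filtration on $K(p\pullback\VV\dual, f_\VV)$ whose associated graded is naturally isomorphic to the tensor product $K(p\pullback\WW\dual, f_\WW) \tensor K(p\pullback\NN\dual, f_\NN)$, and this isomorphism is compatible with the symmetric structures. Since the short exact sequence splits locally on $X_{\et}$, compatibility may be verified after such a splitting, where it reduces to the standard multiplicativity of Koszul complexes for a direct sum of cosections. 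The multiplicativity of Grothendieck--Witt classes under tensor product, applied after pullback along the zero section, then gives $\euler{\VV} = \euler{\WW} \cdot \euler{\NN}$.

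The main obstacle is the sign bookkeeping: the duality functor involves a shift by $r$ in the derived category, and the symmetric isomorphism $\Phi$ on the Koszul complex is canonical only up to Koszul sign conventions. Extracting the precise signs $(-1)^{s(s-1)/2}$ and $(-1)^j$ in the explicit formula requires a careful local coordinate calculation that identifies the symmetric form induced by $\Phi$ on the middle exterior power with the correct twist of $\lpow{s}{\VV\dual}$. This is the delicate technical heart of the Fasel--Srinivas computation.
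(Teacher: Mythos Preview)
The paper does not give its own proof of this proposition: it is stated with the attribution \emph{Fasel--Srinivas~\cite[Prop.~14, 21]{fasel_srinivas:chow-witt}} and no \texttt{proof} environment follows. The result is imported wholesale from the cited reference, so there is no argument in the paper to compare your proposal against.

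That said, your outline is a reasonable sketch of how the Fasel--Srinivas argument actually proceeds. For part~(1), the mechanism you describe---pulling back the Koszul complex along the zero section kills the differentials, leaving a split complex whose symmetric form pairs $\exter{j}{\VV\dual}$ with $\exter{r-j}{\VV\dual}$ hyperbolically for $j \neq s$ and recovers the middle exterior power form (up to a sign) in the middle degree---is exactly the content of \cite[Prop.~14]{fasel_srinivas:chow-witt}. You are right that the delicate point is the sign bookkeeping, and your proposal correctly flags this as the technical heart without actually carrying it out; a complete proof would need to trace the Koszul sign conventions through the shift isomorphism $GW^{(-1)^s}(X,\NN) \isom GW^{2s}(X,\NN)$ explicitly. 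For part~(2), the multiplicativity of Koszul complexes under short exact sequences of bundles, compatible with the symmetric structures, is again the content of \cite[Prop.~21]{fasel_srinivas:chow-witt}, and your reduction to the locally split case is the standard route.

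In short: your proposal is a faithful summary of the cited proof, but since the present paper simply quotes the result, there is nothing here to contrast it with.
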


\begin{remark}
\label{rem:dual_or_not}
Tensoring with $\det\VV\dual$ --- actually multiplication in the
Grothendieck--Witt group by the rank one bilinear form $(\det\VV\dual, \tensor,
(\det\VV\dual)^{\tensor 2})$ --- yields a canonical
isomorphism of groups $GW^r(X,\det\VV) \to GW^r(X,\det\VV\dual)$ via
the evaluation morphism $\det\VV \tensor (\det\VV\dual)^{\tensor 2}
\to \det\VV\dual$.  Under this isomorphism the class of
$\lpow{s}{\VV}$ is mapped to the class of $\lpow{s}{\VV\dual}$.
Also note that $\lpow{s}{\VV}$ is metabolic if and only if $\langle
\pm 1 \rangle \tensor \lpow{s}{\VV}$ is metabolic.
\end{remark}

The following corollary can be viewed as a generalization of
Calm\`es--Hornbostel~\cite[Rem.~7.5]{calmes_hornbostel:push-forwards}.

\begin{corollary}
\label{cor:euler_metabolic}
If $\VV$ has a sub- or quotient line bundle then $\euler{\VV}$ is
metabolic.
\end{corollary}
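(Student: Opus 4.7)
The plan is to read off the corollary directly from the explicit formula for the Euler class in Proposition~\ref{prop:euler_properties}(1), combined with Corollary~\ref{cor:getting_metabolics}. This approach is preferable to invoking the Whitney sum formula, since the latter would additionally require a computation of the Euler class of a line bundle.

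If $r$ is odd, then by definition $\midext{\VV\dual}=0$, so the explicit formula displays $\euler{\VV}$ as a purely hyperbolic class in $GW^r(X,\det\VV\dual)$, which is tautologically metabolic.

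Now assume $r=2s$ is even. An exact sequence $0\to\NN'\to\VV\to\WW'\to 0$ dualizes to $0\to\WW'{}\dual\to\VV\dual\to\NN'{}\dual\to 0$, so the hypothesis that $\VV$ admit a sub- or quotient line bundle is self-dual under $\VV\mapsto\VV\dual$. Applying Corollary~\ref{cor:getting_metabolics} to $\VV\dual$ exhibits a lagrangian of $\lpow{s}{\VV\dual}$. Under Walter's shift isomorphism $GW^{(-1)^s}(X,\det\VV\dual)\cong GW^{r}(X,\det\VV\dual)$ recalled at the start of \S\ref{sec:Euler_classes}, this metabolic bilinear form determines a class $\midext{\VV\dual}$ lying in the image of the hyperbolic map, i.e.\ a metabolic class in shifted $GW$. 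Since metabolic classes form an ideal under Grothendieck--Witt multiplication, $\langle(-1)^{s(s-1)/2}\rangle\tensor\midext{\VV\dual}$ is also metabolic. The remaining summand of the explicit formula is hyperbolic by construction, and a sum of metabolic classes is metabolic, so $\euler{\VV}$ is metabolic.

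The only bookkeeping subtlety is matching the form-theoretic notion of metabolicity used in Corollary~\ref{cor:getting_metabolics} with the notion of lying in the image of the hyperbolic map in shifted Grothendieck--Witt theory; this compatibility is built into the Walter shift isomorphism invoked above, so it poses no genuine obstacle. (As a side remark, the Whitney approach applied to a single line bundle $\NN$ and the formula of Proposition~\ref{prop:euler_properties}(1) in rank $1$ actually yield $\euler{\NN}=0$, which would strengthen "metabolic" to "zero"; but the weaker conclusion above is all that is claimed.)
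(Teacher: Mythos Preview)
Your main argument is correct and follows exactly the paper's own approach: combine the explicit formula of Proposition~\ref{prop:euler_properties}(1) with Corollary~\ref{cor:getting_metabolics} (applied to $\VV\dual$). The paper's proof is a one-line reference to precisely these two ingredients, and it also records the Whitney sum formula as an alternative route, noting only that products of metabolic classes are metabolic.

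Your parenthetical side remark, however, is mistaken. For a line bundle $\NN$ one has $\euler{\NN}=H_{\NN\dual}(\OO_X)$, as the paper itself computes explicitly in \S\ref{subsec:Euler_classes}; this class is hyperbolic but is nonzero unless $\NN\cong\OO_X$ (indeed, its image under the forgetful map to $K_0(X)$ is $[\OO_X]-[\NN\dual]$). Thus the Whitney approach yields $\euler{\VV}=\euler{\WW}\cdot\euler{\NN}$ as a product of metabolic classes---hence metabolic---but not zero in general. Your misreading likely stems from the ambiguity of the summation bound in the displayed formula when $r$ is odd; in any case the rank-$1$ computation later in the paper settles the matter. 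This error does not affect your main proof, but the remark should be deleted or corrected.
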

\begin{proof}
Combine the explicit formula for Euler classes (cf.\ Proposition~\ref{prop:euler_properties}) and
Corollary~\ref{cor:getting_metabolics}.  Another proof employs the
Whitney sum formula for Euler classes (cf.\ Proposition~\ref{prop:euler_properties})
noting that the product of metabolic
classes is metabolic.
\end{proof}

\subsection{Euler classes and quotient line bundles}
\label{subsec:Euler_classes_and_quotient_line_bundles}

For any line bundle $\NN$, Karoubi periodicity gives rise to the exact
sequence
\begin{equation}
\label{eq:Karoubi_periodicity}
GW^{r-1}(X,\NN) \mapto{f} K_0(X) \mapto{H_{\NN}} GW^r(X,\NN) \to
W^r(X,\NN) \to 0
\end{equation}
where $f$ is the forgetful map, see Walter~\cite[Thm.\
2.6]{walter:GW_groups_triangulated_categories}.  If a vector bundle
$\VV$ of even rank $r=2s$ on $X$ has a sub- or quotient line bundle $\LL$,
then by Propositions~\ref{prop:getting_metabolics} and \ref{prop:euler_properties}, the Euler class
$\euler{\VV}$ is hyperbolic:
\begin{equation}
\label{eq:split_euler}
\euler{\VV} = H_{\det\VV\dual}\biggl(
(-1)^s\bigl[\exter{s}{\WW\dual}\bigr] + \sum_{j=0}^{s-1}(-1)^j \bigl[\exter{j}{\VV\dual}\bigr]
\biggr),
\end{equation}
where $\WW$ is the vector bundle complementary to $\LL$.
We can give a useful representation of this class.  For any vector
bundle $\WW$, denote by 
$$
\exter{}{\WW} = \sum_{j=0}^{\rk \WW} (-1)^j \bigl[\exter{j}{\WW\dual}\bigr]
$$
in $K_0(X)$.  Recall that $\exter{}{\WW} = s{}\pullback
s{}\pushforward \OO_X$, where $s : X \to \aff{\WW}$ is the zero
section and $s\pushforward$ and $s\pullback$ are the associated
pushforward and pullback maps on $K_0$.

\begin{proposition}
\label{prop:nice_rep}
Let $\VV$ be a vector bundle of rank $r$ on $X$ and suppose that $0
\to \WW \to \VV \to \LL \to 0$ is a short exact sequence of vector
bundles with $\LL$ a line bundle.  Then we have
$$
\euler{\VV} = H_{\det\VV\dual}\bigl(  
\exter{}{\WW} 
\bigr)
$$
in $GW^r(X,\det\VV\dual)$.
\end{proposition}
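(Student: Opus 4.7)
The plan is to simplify the hyperbolic expression for $\euler{\VV}$ in equation~\eqref{eq:split_euler} using Pascal's rule and the self-duality of the hyperbolic map. Since the hyperbolic construction $H_{\det\VV\dual} : K_0(X) \to GW^r(X,\det\VV\dual)$ is additive, it suffices to show that the argument of $H_{\det\VV\dual}$ in \eqref{eq:split_euler} agrees with $\exter{}{\WW} = \sum_{j=0}^{r-1}(-1)^j[\exter{j}{\WW\dual}]$ modulo $\ker H_{\det\VV\dual}$.

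Two ingredients are needed. First, Pascal's rule (Lemma~\ref{lem:pascal}) applied to the dualized sequence $0 \to \LL\dual \to \VV\dual \to \WW\dual \to 0$ gives the $K_0$ identity $[\exter{j}{\VV\dual}] = [\exter{j}{\WW\dual}] + [\LL\dual]\cdot[\exter{j-1}{\WW\dual}]$. Second, $H_L$ satisfies the self-duality $H_L(\EE) = H_L(\HHom(\EE,L))$ because the underlying bilinear forms $\EE \oplus \HHom(\EE,L)$ and $\HHom(\EE,L)\oplus\HHom(\HHom(\EE,L),L)$ are canonically isometric. Taking $L=\det\VV\dual$ and $\EE = \exter{j}{\WW\dual}$, combined with the perfect pairing $\exter{j}{\WW}\tensor\exter{(r-1)-j}{\WW}\to\det\WW$ and the identity $\det\WW\tensor\det\VV\dual = \LL\dual$ (which follows from $\det\VV = \det\WW\tensor\LL$), this yields the basic congruence
$$
[\exter{j}{\WW\dual}] \;\equiv\; [\LL\dual]\cdot [\exter{(r-1)-j}{\WW\dual}] \pmod{\ker H_{\det\VV\dual}}.
$$

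Substituting Pascal's identity into \eqref{eq:split_euler} and comparing with $H_{\det\VV\dual}(\exter{}{\WW})$ reduces the proposition to showing
$$
\sum_{j=s+1}^{r-1}(-1)^j[\exter{j}{\WW\dual}] \;\equiv\; -[\LL\dual]\sum_{j=0}^{s-2}(-1)^j[\exter{j}{\WW\dual}] \pmod{\ker H_{\det\VV\dual}},
$$
which follows by applying the self-duality congruence termwise to the left-hand side and reindexing $k=(r-1)-j$, using $(-1)^{(r-1)-k} = -(-1)^k$ since $r=2s$ is even.

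The main obstacle is the careful sign and twist bookkeeping; the determinant identity $\det\WW\tensor\det\VV\dual = \LL\dual$ is precisely what makes the self-duality congruence produce the factor $[\LL\dual]$ needed for cancellation, and the parity of $r$ is used exactly once to match signs in the reindexing. Conceptually, no new geometric input is required beyond Pascal's rule in $K_0$ and Serre duality for exterior powers of $\WW$.
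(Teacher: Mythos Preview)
Your argument is correct and follows essentially the same route as the paper's: both reduce to Pascal's rule in $K_0$ together with the congruence $[\LL\dual\tensor\exter{j-1}{\WW\dual}] \equiv [\exter{r-j}{\WW\dual}] \pmod{\ker H_{\det\VV\dual}}$, which is exactly your self-duality congruence after the reindexing $j \leftrightarrow r-j$. The paper merely asserts this congruence, whereas you derive it from the isometry $H_L(\EE)\cong H_L(\HHom(\EE,L))$ combined with the exterior-power pairing and the determinant identity $\det\WW\tensor\det\VV\dual\cong\LL\dual$; your closing phrase ``Serre duality'' is a slight misnomer for this purely linear-algebraic pairing.
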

\begin{proof}
The proof is a straightforward calculation using the explicit formula
for Euler classes (cf.\ Proposition~\ref{prop:euler_properties}),
Lemma~\ref{lem:pascal}, and the fact that, for all $j \geq 1$,
$$
\bigl[\LL\dual\tensor\exter{j-1}{\WW\dual}\bigr] -
\bigl[\exter{r-j}{\WW\dual}\bigr] 
$$
is in the kernel of the hyperbolic map $H_{\det\VV\dual} : K_0(X) \to
GW^r(X,\det\VV\dual)$.  
\end{proof}

In the case when $\VV$ has odd rank, a formula similar to Proposition~\ref{prop:nice_rep} appears in Fasel~\cite[Thm.~10.1]{fasel:I-cohomology}.  
For future reference, we record the equality
\begin{equation}
\label{eq:dual}
\det\WW \tensor \exter{}{\WW} = (-1)^{\rk\WW}\exter{}{\WW\dual}
\end{equation}
in $K_0(X)$.

\section{Rank four vector bundles}
\label{sec:Euler_classes_of_rank_four_vector_bundles}

In this section, we apply the above general results to the specific
case of vector bundles of rank four.  In this special situation, we
are helped by the exceptional isomorphism
$\Dynkin{A}_3=\Dynkin{D}_3$.

\subsection{Middle exterior forms}
\label{subsec:Rank_four_vector_bundles}

Let $X$ be a scheme with 2 invertible. By
Proposition~\ref{prop:midext_functor}, the middle exterior power
functor gives rise to a canonical homomorphism $\lpow{2}{} : \GL(\VV)
\to \GSOrth(\lpow{2}{\VV})$.
When $\VV$ has rank four, this
homomorphism is an isogeny.

\begin{proposition}
\label{prop:A3=D3}
Let $X$ be a scheme with 2 invertible and $\VV$ be a vector bundle of
rank 4 on $X$. There's a short exact sequence
$$
1 \to \muu_2 \to \GL(\VV) \mapto{\lpow{2}{}} \GSOrth(\lpow{2}{\VV})
\to 1
$$ 
of sheaves of groups on $X_{\et}$.
\end{proposition}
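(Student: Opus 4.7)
The plan is to realize $\lpow{2}{}:\GL(\VV)\to\GSOrth(\lpow{2}{\VV})$ produced in Proposition~\ref{prop:midext_functor} as a central isogeny between two smooth affine $X$-group schemes of the same relative dimension, pin down its kernel directly, and reduce \'etale-local surjectivity to the classical exceptional isogeny $\Dynkin{A}_3=\Dynkin{D}_3$ checked on geometric fibers. Proposition~\ref{prop:midext_functor} already ensures that $\lpow{2}{}$ is an $X_{\et}$-homomorphism of sheaves of groups landing in the proper similarity group $\SSSim(\lpow{2}{\VV})=\GSOrth(\lpow{2}{\VV})$, so only the kernel and surjectivity remain.

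For the kernel, an \'etale-local section $\psi$ of $\GL(\VV)$ satisfies $\lpow{2}{}(\psi)=\id$ if and only if $\exter{2}{\psi}=\id$ and $\exter{4}{\psi}=\id$. The wedge pairing $\exter{2}{\VV}\tensor\exter{2}{\VV}\to\det\VV$ forces the second identity from the first, since
\begin{equation*}
\exter{4}{\psi}(v_1\wedge v_2\wedge v_3\wedge v_4) = \exter{2}{\psi}(v_1\wedge v_2)\wedge\exter{2}{\psi}(v_3\wedge v_4),
\end{equation*}
and in particular $\det\psi=1$. After trivializing $\VV$ with basis $e_1,\dotsc,e_4$ and writing $\psi$ as a matrix $A\in\SL_4$, the equations $Ae_i\wedge Ae_j=e_i\wedge e_j$ for $1\leq i<j\leq 4$ together with $\det A=1$ cut out exactly the subscheme $\muu_2=\{\pm I\}$ inside the center $\muu_4$ of $\SL_4$; equivalently, this is the standard fact that the fundamental exterior square representation $\exter{2}{}:\SL_4\to\SL_6$ has kernel the unique order-two subgroup of its center. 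Since $2$ is invertible on $X$, this kernel is $\muu_2$ as a smooth \'etale $X$-group scheme.

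For surjectivity, $\GL(\VV)$ and $\GSOrth(\lpow{2}{\VV})$ are both smooth affine reductive $X$-group schemes of relative dimension $16$ (indeed $\dim\GL_4=16$ and $\dim\GSOrth_6=\dim\SOrth_6+1=15+1=16$), and the morphism $\lpow{2}{}$ between them has finite smooth kernel $\muu_2$. A morphism of smooth $X$-group schemes of the same relative dimension with finite smooth kernel is fppf (hence \'etale-locally) surjective precisely when it is surjective on geometric fibers over $X$. On such a fiber, the compatibility of $\det$ on $\GL_4$ with the multiplier on $\GSOrth_6$ established in the second diagram of Proposition~\ref{prop:midext_functor} reduces (by the snake lemma on the multiplier exact sequences) to surjectivity of the classical exceptional isogeny $\SL_4\to\SOrth_6$ realized by the wedge pairing on $\exter{2}{k^4}$ --- that is, to the defining realization of $\Dynkin{A}_3=\Dynkin{D}_3$. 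The desired short exact sequence of sheaves of groups on $X_{\et}$ then drops out.

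The main obstacle is the kernel calculation: one must rule out any element of the larger center $\muu_4\subset\SL_4$ other than $\pm I$, which hinges on the specific fact that a scalar $\lambda$ acts on the exterior square as $\lambda^2$. Everything else is standard: the dimension count plus the classical $\Dynkin{A}_3=\Dynkin{D}_3$ exceptional isogeny.
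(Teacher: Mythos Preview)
Your proposal is correct and takes essentially the same approach as the paper: identify the kernel as $\muu_2$ and establish \'etale-local surjectivity by reducing to the classical exceptional isogeny $\Dynkin{A}_3=\Dynkin{D}_3$. The paper's own proof is a one-liner that simply declares surjectivity to be the only nontrivial point and dispatches it by citing Knus~\cite[V.5.6]{knus:quadratic_hermitian_forms}, whereas you spell out both the kernel computation and the fiberwise reduction explicitly; your dimension-count plus geometric-fiber argument is a perfectly good substitute for that citation.
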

\begin{proof}
The only thing to check is that $\lpow{2}{}$ is an epimorphism on
$X_{\et}$, which is known, cf.\ Knus~\cite[V.5.6]{knus:quadratic_hermitian_forms}.
\end{proof}

\begin{remark}
The coboundary map $\Het^1(X,\GSOrth(\lpow{2}{\VV})) \to
\Het^2(X,\muu_2)$ associated to the exact sequence in
Proposition~\ref{prop:A3=D3} has the following interpretation: an
(oriented) regular line bundle-valued symmetric bilinear form of rank
6 is mapped to its refined Clifford invariant constructed in \cite[\S2.8]{auel:clifford},
cf.\ Corollary~\ref{cor:knus}.
\end{remark}

The $\Dynkin{D}_3$ version of Theorem~\ref{thm:map_to_LGrass} is more
precise, yielding a stronger version of
Corollary~\ref{cor:getting_metabolics} as follows.

\begin{theorem}
\label{thm:LGrass_D3}
Let $X$ be a scheme with 2 invertible and $\VV$ be a vector bundle of
rank 4 on $X$.  Then
$$
\Phi_{\VV} \sqcup \Phi_{\VV}' : \P(\VV) \sqcup \P(\VV\dual) \to \LGrass(\lpow{2}{\VV})
$$ 
is an isomorphism of $X$-schemes.
\end{theorem}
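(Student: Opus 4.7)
The plan is to reduce, via fppf descent and the fiberwise isomorphism criterion, to the classical exceptional isomorphism $\Dynkin{A}_3=\Dynkin{D}_3$ over an algebraically closed field. First, since $s=2$ is a power of $2$, Theorem~\ref{thm:map_to_LGrass} guarantees that $\Phi_{\VV}$ and $\Phi_{\VV}'$ land in distinct connected components of $\LGrass(\lpow{2}{\VV})$. By Lemma~\ref{lem:disc_triv} the signed discriminant $\discs(\lpow{2}{\VV})$ is canonically trivial, so the Stein factorization of $\LGrass(\lpow{2}{\VV}) \to X$ splits, and $\LGrass(\lpow{2}{\VV})$ is a disjoint union of two smooth projective $X$-schemes, each of relative dimension $\binom{3}{2}=3$. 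Since $\P(\VV)$ and $\P(\VV\dual)$ are also smooth projective of relative dimension $3$, both sides of the morphism are smooth proper $X$-schemes of the same relative dimension.

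Next, being an isomorphism is fppf-local on $X$, so I would trivialize $\VV$ and thereby pull back from the universal case over $\Spec\Z[1/2]$. Since a morphism of smooth proper $X$-schemes is an isomorphism as soon as it restricts to an isomorphism on each fiber, it suffices to check the statement when $X=\Spec k$ for $k$ an algebraically closed field of characteristic not $2$ and $V=k^4$.

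Over such a $k$ I would exploit equivariance under the isogeny $\lpow{2}{}\colon \GL(V) \to \GSOrth(\lpow{2}{V})$ of Proposition~\ref{prop:A3=D3}: both $\Phi_V$ and $\Phi_V'$ are $\GL(V)$-equivariant maps between $\GL(V)$-homogeneous spaces. The stabilizer of $[W]\in\P(V\dual)$ in $\GL(V)$ is the maximal parabolic $\PPP_W$ preserving the hyperplane $W\subset V$; moreover, for $g\in\GL(V)$, the equality $\exter{2}{g(W)}=\exter{2}{g}(\exter{2}{W})=\exter{2}{W}$ forces $g(W)=W$, by the elementary Pl\"ucker-type injectivity of the assignment $W\mapsto\exter{2}{W}$ on $3$-planes of $V$ (if $W\neq W'$ then $\exter{2}{W}\cap\exter{2}{W'}=\exter{2}{(W\cap W')}$ is $1$-dimensional, not $3$). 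Since $\muu_2=\ker(\lpow{2}{})$ is central and contained in $\PPP_W$, the stabilizer of $\exter{2}{W}$ in $\GSOrth(\lpow{2}{V})$ is exactly $\PPP_W/\muu_2$, and the induced map $\GL(V)/\PPP_W \to \GSOrth(\lpow{2}{V})/(\PPP_W/\muu_2)$ is an isomorphism of homogeneous spaces, identifying $\Phi_V'$ with an isomorphism onto its target component of $\LGrass(\lpow{2}{V})$. The same argument applied to $\VV\dual$ handles $\Phi_V$. The main obstacle is the fiberwise identification of stabilizers under the isogeny (i.e.\ the classical $\Dynkin{A}_3=\Dynkin{D}_3$ statement that the two rulings of the Klein quadric on $\exter{2}{V}$ are parametrized by $\P(V)$ and $\P(V\dual)$); the globalization is routine.
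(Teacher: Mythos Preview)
Your argument is correct, and in fact the paper's own proof begins by noting that ``one can argue, by passing to fibers, using the classical case over fields,'' before opting for a more global approach. The paper constructs the isomorphism directly over $X$: it fixes the standard parabolic $\Group{P}\subset\GL_4$ stabilizing $\OO_X^3\subset\OO_X^4$, checks that the isogeny $\lpow{2}{}$ of Proposition~\ref{prop:A3=D3} carries $\Group{P}$ onto the parabolic $\Group{Q}\subset\GSOrth_{3,3}$ stabilizing the lagrangian $\exter{2}{\OO_X^3}$, and then passes to the twisted forms via the equivariant torsor map $\IIsom(\OO_X^4,\VV)\to\SSSim(\lpow{2}{\OO_X^4},\lpow{2}{\VV})$, invoking SGA~3 to identify the resulting quotients with $\P(\VV)$ and the relevant component of $\LGrass(\lpow{2}{\VV})$. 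Your route instead reduces to geometric fibers and recovers essentially the same stabilizer comparison there by hand (the Pl\"ucker-type injectivity $W\mapsto\exter{2}{W}$). The paper's approach avoids invoking a fiberwise isomorphism criterion and keeps everything scheme-theoretic over the base; yours is perhaps more transparent about where the classical $\Dynkin{A}_3=\Dynkin{D}_3$ geometry enters. One small caution: when you appeal to ``a morphism of smooth proper $X$-schemes is an isomorphism as soon as it is on fibers,'' you should cite the precise EGA-type statement (you need flatness of the source over $X$, which you have, to apply the fiberwise criterion for \'etaleness and then conclude).
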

\begin{proof}
One can argue, by passing to fibers, using the classical case over
fields. We prefer to argue directly as follows.  Let $\Group{P}
\subsetto \GL_4$ be the parabolic subgroup given by the stabilizer of
$\OO_X^3 \subset \OO_X^4$ and let $\Group{Q} \subsetto \SGOrth_{3,3}$
be the parabolic subgroup corresponding to the associated choice of
oriented lagrangian $\exter{2}{\OO_X^3}$ of $\lpow{2}{\OO_X^4}\isom
H_{\OO_X}(\OO_X^3)$.  Then upon restricting the morphism $\lpow{2}{}$,
we have a commutative diagram
$$
\xymatrix@C=46pt@R=18pt{
1 \ar[r] & \muu_2 \ar@{=}[d] \ar[r] & \Group{P} \ar@{^{(}->}[d]
\ar[r]^(.47){\lpow{2}{}} & \Group{Q} \ar@{^{(}->}[d] \ar[r] & 1 \\
1 \ar[r] & \muu_2 \ar[r] & \GL_4 \ar[r]^(.47){\lpow{2}{}}
&\GSOrth_{3,3} \ar[r] & 1
}
$$
of groups schemes on $X$ extending Proposition~\ref{prop:A3=D3}.
Similarly, we have a epimorphism of (right) torsors
$$
\xymatrix@C=46pt{
\IIsom(\OO_X^4,\VV)
\ar@{->>}[r]^(.45){\lpow{2}{}} & \SSSim(\lpow{2}{\OO_X^4},\lpow{2}{\VV})
}
$$
equivariant for the corresponding homomorphism of group schemes.
Then in this situation, we have an induced commutative diagram of $X$-scheme isomorphisms 
$$
\xymatrix@C=40pt@R=18pt{
\IIsom(\OO_X^4,\VV)/\Group{P}
\ar[d]_(.45){\text{\rotatebox{90}{$\sim$}}} \ar[r]^(.45){\sim} & 
\SSSim(\lpow{2}{\OO_X^4},\lpow{2}{\VV})/\Group{Q} \ar[d]^(.45){\text{\rotatebox{90}{$\sim$}}} \\
\P(\VV)
\ar[r]^(.45){\Phi_{\VV}} & 
\LGrass(\lpow{2}{\VV})^{\circ}
}
$$ 
where $\LGrass(\lpow{2}{\VV})^{\circ}$ is the connected component
containing the image of $\Phi_{\VV}$ and the vertical isomorphism are
a consequence of SGA~3~\cite[III XXVI.3 Lemma 3.2]{SGA3}, since these
projective homogeneous schemes are moduli spaces of parabolic
subgroups of the associated groups.

Note that if $\GL(\VV)$ actually had a parabolic subgroup
$\Group{P}_{\WW}$ of the same type as $\Group{P}$ (i.e.\ there exists
an exact sequence $0 \to \WW \to \VV \to \LL \to 0$ with $\LL$ a line
bundle), then the above argument can be summarized with the following
commutative diagram
$$
\xymatrix@C=40pt@R=18pt{
1 \ar[r] & \muu_2 \ar@{=}[d] \ar[r] & \Group{P}_{\WW} \ar@{^{(}->}[d]
\ar[r]^(.45){\lpow{2}{}} & \Group{P}_{\lpow{2}{\WW}} \ar@{^{(}->}[d] \ar[r] & 1 \\
1 \ar[r] & \muu_2 \ar[r] & \GL(\VV) \ar@{->>}[d] \ar[r]^(.45){\lpow{2}{}} &\GSOrth(\lpow{2}{\VV}) \ar@{->>}[d] \ar[r] & 1 \\
&& \P(\VV) \ar@{.>}[r]^(.45){\Phi_{\VV}} & \LGrass(\lpow{2}{\VV})^{\circ}&
}
$$
of $X$-scheme, where the top two rows are short exact sequences
of group schemes.

So far, we've shown that $\Phi_{\VV}$ is an isomorphism onto a
connected component.  The same argument can be used for $\Phi_{\VV}'$
(using the dual parabolic subgroup of $\Group{P}$).  Since
$\LGrass(\lpow{2}{\VV}) \to X$ has two connected $X$-components (since
the discriminant of $\lpow{2}{\VV}$ is trivial) and $\Phi_{\VV}$ and
$\Phi_{\VV}'$ map to different components, we are done.
\end{proof}

\begin{corollary}
\label{cor:lineiffmetabolic}
Let $X$ be a scheme with 2 invertible and $\VV$ a vector bundle of
rank 4 on $X$.  Then $\VV$ has a sub- or quotient line bundle if and
only if $\lpow{2}{\VV}$ is metabolic.
\end{corollary}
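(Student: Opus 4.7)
The forward implication is precisely Corollary~\ref{cor:getting_metabolics}: a sub-line bundle (resp.\ quotient line bundle) of $\VV$ produces a lagrangian of $\lpow{2}{\VV}$ via the exact sequence~\eqref{eq:pascal_dual} (resp.~\eqref{eq:pascal}), so $\lpow{2}{\VV}$ is metabolic. The content of the corollary therefore lies entirely in the converse, and the plan is to read it off from Theorem~\ref{thm:LGrass_D3} by unwinding the moduli interpretations on the two sides.

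So assume $\lpow{2}{\VV}$ is metabolic, meaning it admits a lagrangian subbundle. By the moduli description of $\LGrass(\lpow{2}{\VV})$ recalled in \S\ref{subsec:Lagrangian_grassmannians}, the choice of such a lagrangian is equivalent to a section $\sigma : X \to \LGrass(\lpow{2}{\VV})$ of the structure morphism. Composing $\sigma$ with the inverse of the $X$-scheme isomorphism
$$
\Phi_{\VV} \sqcup \Phi_{\VV}' : \P(\VV) \sqcup \P(\VV\dual) \to \LGrass(\lpow{2}{\VV})
$$
provided by Theorem~\ref{thm:LGrass_D3} produces a section $\wt{\sigma} : X \to \P(\VV) \sqcup \P(\VV\dual)$ of the structure morphism of the right-hand side.

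Any such section to a disjoint union yields an open-closed decomposition $X = X_1 \sqcup X_2$, where $\wt{\sigma}|_{X_1}$ factors through $\P(\VV)$ and $\wt{\sigma}|_{X_2}$ through $\P(\VV\dual)$. By the standard moduli interpretations of $\P(\VV)$ and $\P(\VV\dual)$ recalled in \S\ref{subsec:Lagrangian_grassmannians}, these restrictions correspond respectively to a quotient line bundle of $\VV|_{X_1}$ and a sub-line bundle of $\VV|_{X_2}$, which is precisely the sub- or quotient line bundle structure called for.

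There is essentially no obstacle left: Theorem~\ref{thm:LGrass_D3} does all the heavy lifting, and what remains is routine bookkeeping of representable functors. The only point worth flagging is the mild convention, already forced by the two-component structure of $\LGrass(\lpow{2}{\VV}) \to X$ (itself a reflection of the trivial signed discriminant from Lemma~\ref{lem:disc_triv}), that "$\VV$ has a sub- or quotient line bundle" on a disconnected $X$ may mean a quotient line bundle on one open-closed piece and a sub-line bundle on its complement; when $X$ is connected the section $\wt{\sigma}$ lands in exactly one component, producing one or the other uniformly.
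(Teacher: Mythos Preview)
Your argument is correct and is exactly what the paper intends: the corollary is stated immediately after Theorem~\ref{thm:LGrass_D3} with no explicit proof, and your unwinding of the moduli descriptions on both sides of the isomorphism $\Phi_{\VV}\sqcup\Phi_{\VV}'$ is precisely the implicit reasoning. Your remark on the disconnected case is a useful clarification that the paper leaves tacit.
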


Of course, if $r=2s$ is even and $\lpow{s}{\VV}$ is metabolic, then
the Witt class $\midext{\VV} \in W^r(X,\det\VV)$ vanishes (see
Remark~\ref{rem:dual_or_not}).  In general, the converse --- that if
$\lpow{s}{\VV}$ is stably metabolic (i.e.\ it's Witt class
$\midext{\VV}$ is trivial) then it is metabolic --- may not hold.
However, under certain hypotheses on $X$, which we shall now outline,
the converse does indeed hold.

\subsection{Stably metabolic forms}
\label{subsec:cancellation}

We introduce the following properties
of an exact category $\category{C}=(\category{C},{}^{\sharp},\varpi)$
with duality and 2 invertible, see
Quebbemann--Scharlau--Schulte~\cite{quebbemann_scharlau_et_al}, \cite{quebbemann_scharlau_schulte},
Knus~\cite[II]{knus:quadratic_hermitian_forms},
Balmer~\cite[\S1.1.2]{balmer:handbook}, or
Walter~\cite[\S6]{walter:GW_groups_triangulated_categories} for
definitions.

\begin{definition}
\label{rem:cancellation}
The \linedef{stable metabolicity} (sM) property:\ 
given $\epsilon$-symmetric objects $(\EE,b)$ and $(\MM,h)$
of $\category{C}$ with $(\MM,h)$ metabolic, if $(\EE,b) \perp (\MM,h)$
is metabolic then $(\EE,b)$ is metabolic.  Equivalently, $(\EE,b)$ is
metabolic if it has trivial Witt class.

The \linedef{Witt cancellation} (Wc) property:\ 
given $\epsilon$-symmetric objects
$(\EE_1,b_1)$, $(\EE_2,b_2)$, and $(\FF,b)$ of $\category{C}$, if
$(\EE_1,b_1)\perp(\FF,b) \isom (\EE_2,b_2) \perp (\FF,b)$
then $(\EE_1,b_1) \isom (\EE_2,b_2)$.

The \linedef{stable Grothendieck--Witt equivalence} (sGW) property:\ 
given $\epsilon$-symmetric objects $(\EE_1,b_1)$,
$(\EE_2,b_2)$, 
$(\MM_1,h_1)$, $(\MM_2,h_2)$, and $(\FF,b)$  of $\category{C}$ with
$(\MM_1,h_1)$ and $(\MM_2,h_2)$ metabolic with isomorphic lagrangians, if
$
(\EE_1,b_1) \perp (\MM_1,h_1) \perp (\FF,b) \isom (\EE_2,b_2) \perp
(\MM_2,h_2) \perp (\FF,b)
$ 
then $(\EE_1,b_1) \isom (\EE_2,b_2)$.
Equivalently, $(\EE_1,b_1) \isom (\EE_2,b_2)$  if they
have the same  Grothendieck--Witt class.

A scheme $X$ is said to satisfy a property if for every line
bundle $\LL$ and every $\epsilon \in \{\pm 1\}$, the property is
satisfied for $\epsilon$-symmetric objects of the exact category
$\VB(X)$ with the duality given by
$\HHom(-,\LL)$.
\end{definition}

\begin{corollary}
\label{cor:with_witt_cancellation}
Let $X$ be a scheme with 2 invertible satisfying (sM). Then a vector
bundle $\VV$ of rank 4 has a sub- or quotient line bundle if and only
if $\midext{\VV} = 0$ in $W^4(X,\det\VV)$, equivalently, $\euler{\VV}
= 0$ in $W^4(X,\det\VV\dual)$.
\end{corollary}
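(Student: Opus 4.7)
The plan is to assemble three ingredients already in hand: Corollary \ref{cor:lineiffmetabolic}, the (sM) hypothesis, and the explicit formula of Proposition \ref{prop:euler_properties}(1). No new geometry enters; the content is bookkeeping between $GW^4$ and $W^4$.

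First, I would handle the equivalence between the existence of a sub- or quotient line bundle and the vanishing of $\midext{\VV}$ in $W^4(X,\det\VV)$. By Corollary \ref{cor:lineiffmetabolic}, such a line bundle exists exactly when $\lpow{2}{\VV}$ is metabolic. One direction of the comparison with Witt classes is automatic: a metabolic form represents the zero class in $W^4$. The converse is precisely the content of (sM) applied to the $(+1)$-symmetric form $\lpow{2}{\VV}$ (after the shift identification $GW^{+1}(X,\det\VV)\isom GW^4(X,\det\VV)$ of Walter). This already gives the first ``iff''.

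Next, I would translate into the Euler class by specializing Proposition~\ref{prop:euler_properties}(1) to $r=4$, $s=2$. Here $(-1)^{s(s-1)/2}=-1$, and the hyperbolic summand dies in the Witt quotient of Karoubi periodicity \eqref{eq:Karoubi_periodicity}, so
$$
\euler{\VV} \;=\; \langle -1\rangle \tensor \midext{\VV\dual}
\qquad \text{in } W^4(X,\det\VV\dual).
$$
Multiplication by the rank-one form $\langle -1\rangle$ is an automorphism of $W^4(X,\det\VV\dual)$, so $\euler{\VV}=0$ if and only if $\midext{\VV\dual}=0$ in $W^4(X,\det\VV\dual)$. Finally, Remark~\ref{rem:dual_or_not} provides the canonical isomorphism $W^4(X,\det\VV)\isom W^4(X,\det\VV\dual)$ sending $\midext{\VV}$ to $\midext{\VV\dual}$, which aligns the two vanishing conditions.

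Combining these two steps yields all three equivalences. There is no real obstacle; if anything warrants care, it is simply confirming that the hyperbolic term in Proposition~\ref{prop:euler_properties}(1) vanishes in $W^4$ (immediate from \eqref{eq:Karoubi_periodicity}) and that the isomorphism of Remark~\ref{rem:dual_or_not} is compatible with passage to the Witt quotient — both of which are formal.
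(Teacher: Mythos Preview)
Your proposal is correct and follows essentially the same route as the paper's own proof: Corollary~\ref{cor:lineiffmetabolic} together with (sM) for the first equivalence, then Proposition~\ref{prop:euler_properties}(1) specialized to $r=4$ (with the hyperbolic term discarded in the Witt quotient) and Remark~\ref{rem:dual_or_not} for the second. The only difference is that you spell out a bit more explicitly why the hyperbolic summand dies and why $\langle -1\rangle$ is harmless, which the paper leaves implicit.
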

\begin{proof}
The claim concerning $\midext{\VV}$ is a direct consequence of
Corollary~\ref{cor:lineiffmetabolic} and the 
property (M), under which the form $\lpow{2}{\VV}$ is metabolic if and only
if the class $\midext{\VV} \in W^4(X,\det\VV)$ vanishes.  For the
final equivalence, note that by
Proposition~\ref{prop:euler_properties}, we have
$\euler{\VV} = \langle -1 \rangle \tensor \midext{\VV\dual}$ in
$W^4(X,\det\VV\dual)$.  Thus $\euler{\VV}$ vanishes in
$W^4(X,\det\VV\dual)$ if and only if $\midext{\VV\dual}$ does (if and
only if $\midext{\VV}$ does, see Remark \ref{rem:dual_or_not}).
\end{proof}

We will spend the rest of this section exploring these
properties.  First, note that we have the implication (sGW) $\Rightarrow$ (Wc).

\begin{proposition}
\label{prop:affine}
Over a noetherian affine scheme $X$ with 2 invertible, we have the
following implications
(Wc) $\Rightarrow$ (sGW) $\Rightarrow$ (sM).
\end{proposition}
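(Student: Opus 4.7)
The plan is to treat the two implications separately.

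For (Wc) $\Rightarrow$ (sGW), the key preliminary observation (valid over any scheme with 2 invertible) is that any metabolic form is oriented isomorphic to the hyperbolic form on any of its lagrangians: the self-dual short exact sequence defining the lagrangian splits canonically, since when 2 is invertible any set-theoretic splitting can be symmetrized via averaging with its ``orthogonal transpose.'' Hence two metabolic forms with isomorphic lagrangians are themselves isomorphic, and under the hypothesis of (sGW) we have $(\MM_1,h_1) \isom (\MM_2,h_2)$, say both equal to $(\MM,h)$. Two applications of (Wc)---first canceling $(\FF,b)$, then canceling $(\MM,h)$---then yield $(\EE_1,b_1) \isom (\EE_2,b_2)$. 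This implication does not require any noetherian or affine hypothesis on $X$.

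For (sGW) $\Rightarrow$ (sM), first note that (sGW) $\Rightarrow$ (Wc) by taking $(\MM_1,h_1)=(\MM_2,h_2)=0$ in the (sGW) hypothesis. Now suppose $(\EE,b)$ of rank $r$ has trivial Witt class. By the Karoubi periodicity exact sequence~\eqref{eq:Karoubi_periodicity}, there exists $x \in K_0(X)$ with $[(\EE,b)] = H_{\LL}(x)$ in $GW^r(X,\LL)$. Writing $x = [\GG] - [\GG']$ for vector bundles $\GG, \GG'$ on $X$, we obtain the equality $[(\EE,b) \perp H_{\LL}(\GG')] = [H_{\LL}(\GG)]$ in $GW^r(X,\LL)$, and the ``equivalent'' formulation of (sGW) gives an isomorphism $(\EE,b) \perp H_{\LL}(\GG') \isom H_{\LL}(\GG)$. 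To pass from here to metabolicity of $(\EE,b)$ itself, I would invoke Bass's cancellation theorem and Serre's splitting theorem, valid over $X$ noetherian affine of finite Krull dimension: after stabilizing $\GG, \GG'$ by adding sufficiently many copies of $\OO_X$, one arranges a vector bundle decomposition $\GG \isom \GG_0 \oplus \GG'$ for some $\GG_0$. Then $H_{\LL}(\GG) \isom H_{\LL}(\GG_0) \perp H_{\LL}(\GG')$, and canceling $H_{\LL}(\GG')$ via (Wc) yields $(\EE,b) \isom H_{\LL}(\GG_0)$, a hyperbolic (and hence metabolic) form.

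The main obstacle will be the final stabilization and splitting step: representing $x = [\GG]-[\GG']$ by the class of an actual vector bundle in $K_0(X)$, and realizing $\GG \isom \GG_0 \oplus \GG'$ as an honest direct sum decomposition, requires Serre's splitting theorem, which only provides free summands once ranks exceed $\dim X$. Since $\rk \GG - \rk \GG' = \rk \EE / 2$, this works directly when $\rk \EE \geq 2 \dim X$; in the small-rank regime one may need to iterate the argument or exploit the full Karoubi exact sequence to adjust $x$ by an element in the image of the forgetful map from $GW^{r-1}$. I expect these subtleties can be handled by the standard cancellation techniques in the noetherian affine setting.
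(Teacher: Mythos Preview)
Your argument for (Wc) $\Rightarrow$ (sGW) is essentially the paper's, but your claim that it needs no affine hypothesis is wrong. The symmetrization trick requires an $\OO_X$-linear splitting of the lagrangian sequence $0 \to \FF \to \EE \to \HHom(\FF,\LL) \to 0$ to begin with; there is no ``set-theoretic splitting'' for sheaves, and over a non-affine scheme such sequences need not split at all (this is precisely the distinction between metabolic and hyperbolic). Affineness is what guarantees the splitting, and hence the equivalence metabolic $=$ hyperbolic that the whole step rests on.

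For (sGW) $\Rightarrow$ (sM) there is a genuine gap, and you have correctly located it. Your reduction via Karoubi periodicity and (sGW) to an isometry $(\EE,b)\perp H_{\LL}(\GG') \isom H_{\LL}(\GG)$ is fine, but the proposed endgame via Bass cancellation and Serre splitting does not close: even after stabilizing, you only arrive at $(\EE,b)\perp H_{\LL}(\OO_X^N)\isom H_{\LL}(\GG_0)$ for some $\GG_0$ and some $N$, and you still have no reason for $\OO_X^N$ to be a direct summand of $\GG_0$. Moreover, Serre/Bass require a bound on $\dim X$, which the proposition does not assume.

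The missing idea, which the paper supplies, is a trick of Roy. First use affineness to complement $\GG'$ to a free module, reducing to $(\EE,b)\perp H_{\LL}(\OO_X^n)\isom H_{\LL}(\NN)$. Now take a hyperbolic pair $e,f$ in one copy of $H_{\LL}(\OO_X)$ and push it across the isometry to $v+\vp\in \NN\oplus\HHom(\NN,\LL)$; then $\vp(v)=f(e)=1$, so $v$ is unimodular and $\NN\isom\OO_X\oplus\NN'$. This gives $H_{\LL}(\NN)\isom H_{\LL}(\OO_X)\perp H_{\LL}(\NN')$, and (Wc) cancels one $H_{\LL}(\OO_X)$. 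Induction on $n$ finishes. This avoids any dimension hypothesis and any appeal to stable-range theorems.
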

\begin{proof}
We fix, and then suppress the dependence on, some $\epsilon \in \{\pm
1\}$.  Over an affine scheme with 2 invertible, metabolic is
equivalent to hyperbolic; see \cite[\S3~Prop.~1,~Cor.~1]{knebusch}.
In particular, metabolic forms having isomorphic lagrangians are
isometric.  Hence (Wc) $\Rightarrow$ (sGW).  Now, if $(\EE,b)$ is
stably metabolic, then there exist vector bundles $\NN_1$ and $\NN_2$
such that $(\EE,b) \perp H(\NN_1) \isom H(\NN_2)$.  Since $X$ is
affine, there exists a vector bundle $\NN_3$ such that $\NN_1 \oplus
\NN_3 \isom \OO_X^n$ is free.  Thus $(\EE,b) \perp H(\OO_X^n) \isom
H(\NN)$, where $\NN = \NN_1 \oplus \NN_3$.  Inspired by Roy~\cite[Ex.\
7.3]{roy:cancellation}, 
we choose a hyperbolic pair $e$ and $f$ of $H(\OO_X)$ and consider the
image $v + \vp$ in $H(\NN)$ of $e+f$.  Then $\vp(v) = f(e) = 1$, so
that $v$ is a unimodular element generating a free direct summand of
$\NN$.  In particular, $H(\NN) \isom H(\OO_X) \oplus H(\NN')$.  By
(Wc), we can cancel $H(\OO_X)$.  By induction, we deduce that
$(\EE,b)$ is hyperbolic, hence (sM) holds.
\end{proof}

There may exist stably metabolic objects that are not metabolic.
Such an example due to Ojanguren (see
\cite[Ex.~40]{balmer:handbook}): the reduced norm form associated to
the endomorphism algebra of the tangent bundle of the real 2-sphere
$\Spec \R[x,y,z]/(x^2+y^2+z^2-1)$.

\begin{definition}
We say that an exact category $\category{C}$ satisfies the
\linedef{Krull--Schmidt property} (KS) if every object has a unique
(up to permutation) coproduct decomposition into indecomposable
objects, see Atiyah~\cite[Thm.~3]{atiyah:krull_schmidt}.
Additionally, we say that $\category{C}$ satisfies the \linedef{strong
Krull--Schmidt property} (KS+) if for every object $\EE$ the ring
$\End_{\category{C}}(\EE)$ is complete with respect to the
$\rad(\End_{\category{C}}(\EE))$-adic topology, cf.\
\cite[\S3(iii)]{quebbemann_scharlau_schulte} or
\cite[II~\S6.3]{knus:quadratic_hermitian_forms}.
\end{definition}

For example, (KS+) is satisfied in the following cases: the exact category $\VB(X)$ of vector bundles over a
scheme $X$ proper over a field (see~\cite[Cor.~Lemma 10]{atiyah:krull_schmidt} and~\cite[II \S7]{knus:quadratic_hermitian_forms} or
\cite[\S4.1]{quebbemann_scharlau_et_al}), and the category of
projective modules over a (possibly noncommutative) artinian ring
(see~\cite[II Ex.~6.6.2]{knus:quadratic_hermitian_forms}).

It is known that (KS+) $\Rightarrow$ (Wc), see
\cite[Satz~3.4(iii)]{quebbemann_scharlau_et_al},
\cite[\S3.4(1)]{quebbemann_scharlau_schulte}, or
\cite[II~Thm.~6.6.1]{knus:quadratic_hermitian_forms}.  In particular,
the spectrum of an artinian ring satisfies (M) by Proposition~\ref{prop:affine}.

Let $\Sigma$ be a set of indecomposable objects of an exact category
$\category{C}$. An object $\EE$ is of \linedef{type} $\Sigma$ (resp.\
$\Sigma'$) if every indecomposable direct summand of $\EE$ is
isomorphic to some element of $\Sigma$ (resp.\ if no indecomposable
direct summand is isomorphic to some element of $\Sigma$). An important consequences of the strong Krull--Schmidt property is
the Krull--Schmidt theorem for symmetric objects, see
\cite[Thm.~3.2--3.3(1)]{quebbemann_scharlau_schulte} or
\cite[II~Thm.~6.3.1]{knus:quadratic_hermitian_forms}.

\begin{theorem}[Symmetric Krull--Schmidt]
Let $(\category{C},{}^{\sharp},\varpi)$ be an exact category with
duality.  If $\category{C}$ satisfies (KS) then every
$\epsilon$-symmetric object $(\EE,b)$ has an orthogonal decomposition
$$
(\EE,b) \isom \bigperp_{i=1}^{r} (\EE_i,b_i)
$$
with $\EE_i$ of type $\{\NN_i,\NN_i^{\sharp}\}$ and $\NN_i \oplus
\NN_i^{\sharp} \not\isom \NN_j \oplus \NN_j^{\sharp}$ for $i\neq j$.
Moreover, if $\category{C}$ satisfies (KS+), then
this orthogonal decomposition is unique up to isometry and permutation.
\end{theorem}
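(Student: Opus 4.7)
The plan is to establish existence under (KS) by induction on the length of the Krull--Schmidt decomposition of the underlying object, where the induction step produces a single non-degenerate isotypic orthogonal summand; uniqueness under (KS+) is then handled block by block via cancellation.

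For the inductive step, fix any indecomposable summand $\NN_1$ of $\EE$ and group its Krull--Schmidt decomposition as $\EE \isom A \oplus B$, where $A$ collects the summands isomorphic to $\NN_1$ or $\NN_1^{\sharp}$ and $B$ collects the rest. Dualizing and using the bidual $\varpi$, we obtain $\EE^{\sharp} \isom A^{\sharp} \oplus B^{\sharp}$ with matching dual-pair structure. Writing the adjoint $\adj{b} : \EE \to \EE^{\sharp}$ as a $2 \times 2$ array of block components, once the diagonal block $\alpha : A \to A^{\sharp}$ is shown to be an isomorphism, a standard Gram--Schmidt procedure produces the orthogonal splitting $\EE = A \perp A^{\perp}$ (with $A^{\perp}$ the orthogonal complement of $A$ with respect to $b$) and exhibits $(A, b|_A)$ as a summand of the desired type $\{\NN_1, \NN_1^{\sharp}\}$. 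Iterating on the strictly shorter $(A^{\perp}, b|_{A^{\perp}})$ then finishes existence.

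The hard step will be showing that $\alpha$ is an isomorphism. The argument I have in mind is Fitting-type and uses the following consequence of (KS): any endomorphism of a direct sum of indecomposables of type $\{\NN_1, \NN_1^{\sharp}\}$ which factors through an object of disjoint dual-pair type must lie in the Jacobson radical of the endomorphism ring, since otherwise $\NN_1$ or $\NN_1^{\sharp}$ would appear as a summand of a disjoint-type object, contradicting the uniqueness clause of (KS). Expanding $\adj{b} \circ (\adj{b})^{-1} = \id_{\EE^{\sharp}}$ blockwise, the $(A^{\sharp}, A^{\sharp})$ entry reads $\alpha \circ (\adj{b})^{-1}_{A^{\sharp} A} = \id_{A^{\sharp}} - r$, where $r$ is a cross composition factoring through $B$ and is therefore a radical element; this yields a right inverse to $\alpha$, and the symmetric computation applied to $(\adj{b})^{-1} \circ \adj{b} = \id_{\EE}$ produces the left inverse. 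The separation hypothesis $\NN_i \oplus \NN_i^{\sharp} \not\isom \NN_j \oplus \NN_j^{\sharp}$ for $i \neq j$ is precisely what guarantees that cross-type terms land in the radical.

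For uniqueness under (KS+), suppose $(\EE, b)$ admits two orthogonal decompositions of the stated form. Ordinary Krull--Schmidt applied to the underlying objects indexes them compatibly by the same dual-pair types and matches the isotypic pieces up to isomorphism of objects. It then remains to upgrade these underlying isomorphisms to isometries of $\epsilon$-symmetric forms within each isotypic block, which is exactly a Witt cancellation statement for forms whose underlying endomorphism rings are complete with respect to their Jacobson radicals; this is the content of Satz~3.4(iii) of Quebbemann--Scharlau--Schulte already cited in the excerpt. Applying this cancellation block by block finishes the theorem.
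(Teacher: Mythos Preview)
The paper does not actually prove this statement; it simply records it as a known theorem and points to Quebbemann--Scharlau--Schulte \cite[Thm.~3.2--3.3(1)]{quebbemann_scharlau_schulte} and Knus \cite[II~Thm.~6.3.1]{knus:quadratic_hermitian_forms}. Your outline follows exactly the strategy of those references (split off a dual-pair isotypic block by showing the diagonal corner of $\adj{b}$ is invertible, then induct; deduce uniqueness from Witt cancellation under completeness), so as a sketch it is faithful to the literature the paper defers to.

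There is, however, a genuine gap in your justification of the key step under the bare hypothesis (KS). You assert that a composite $A^{\sharp}\to B\to A^{\sharp}$ lies in $\mathrm{rad}\,\End(A^{\sharp})$ ``since otherwise $\NN_1$ or $\NN_1^{\sharp}$ would appear as a summand of a disjoint-type object''. That inference is not valid: an element $r$ failing to lie in the Jacobson radical only means that $1-sr$ is a non-unit for some $s$; it does not by itself produce a split monomorphism from $\NN_1$ (or $\NN_1^{\sharp}$) into $B$. What the standard argument actually uses is that each indecomposable has \emph{local} endomorphism ring: then every entry $\NN_1\to B\to \NN_1$ of the matrix $r$ is a non-isomorphism (this much does follow from (KS)), hence a non-unit, hence in the maximal ideal of $\End(\NN_1)$, and therefore $r$ lies in the radical of the matrix ring $\End(A^{\sharp})$. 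Uniqueness of decomposition alone does not force indecomposables to have local endomorphism rings (for instance, the additive category generated by a single indecomposable $N$ with $\End(N)=\Z$ satisfies both (KS) and (KS+) as defined here, yet $\Z$ is not local). In the cited sources this locality is part of the standing hypotheses. So either invoke it explicitly---noting that this is what (KS+) is meant to deliver in the contexts of interest---or replace the radical argument by one that genuinely uses only uniqueness of decomposition, if you believe such an argument exists.
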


\begin{remark}
Two metabolic objects having isomorphic lagrangians need not share
Krull--Schmidt decompositions.  For example, over $\P^1$, 
$\OO(-1)^2$
can be presented as a lagrangian of the hyperbolic form $H(\OO^2)$, hence $H(\OO^2)$ and $H(\OO(-1)^2)$ have isomorphic
lagrangians, yet have very different Krull--Schmidt decompositions.
\end{remark}

We recall the following generalization of a
result of Grothendieck \cite{grothendieck:fibres}, cf.\
\cite[\S3.4(3)]{quebbemann_scharlau_schulte} or
\cite[II~Prop.~7.1.1]{knus:quadratic_hermitian_forms}.

\begin{theorem} 
\label{thm:grothendieck}
Let $k$ be an algebraically closed field and
$(\category{C},{}^{\sharp},\varpi)$ an exact $k$-category with
duality satisfying (KS+). Then
$\epsilon$-symmetric objects $(\EE_1,b_1)$ and $(\EE_2,b_2)$ are
isometric if and only if $\EE_1 \isom \EE_2$.
\end{theorem}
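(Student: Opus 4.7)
The ``only if'' direction is trivial. For ``if'', the plan is to combine the Symmetric Krull--Schmidt theorem just stated with the algebraic closure of $k$ to reduce the classification of $\epsilon$-symmetric forms on a fixed object to the classification of forms over the residue field of a local $k$-algebra.

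First, orthogonally decompose $(\EE_i, b_i) \isom \bigperp_j (\EE_{i,j}, b_{i,j})$, with each $\EE_{i,j}$ of type $\{\NN_j,\NN_j^{\sharp}\}$. Since $\EE_1 \isom \EE_2$ and (KS+) furnishes unique indecomposable decompositions, the types $\{\NN_j, \NN_j^{\sharp}\}$ appearing on the two sides, and the multiplicities of $\NN_j$ and $\NN_j^{\sharp}$ within each $\EE_{i,j}$, all match. So it suffices to treat a single type, which breaks into two subcases. In the first, $\NN \not\isom \NN^{\sharp}$: writing a regular form $b:\EE \to \EE^{\sharp}$ on $\EE = \NN^a \oplus (\NN^{\sharp})^b$ in block-matrix form and reducing modulo the radical (the homomorphism sets between non-isomorphic indecomposables consist of non-isomorphisms, hence lie in the Jacobson radical), one sees that the off-diagonal blocks between $\NN^a$ and $\NN^b$ must induce an isomorphism, forcing $a=b$, and $(\EE,b)$ is then canonically hyperbolic. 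Two hyperbolic forms on the same underlying object are visibly isometric.

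The second and main case is $\NN \isom \NN^{\sharp}$, where both $\EE_i$ are isomorphic to a common $\NN^n$. Fix an isomorphism $\NN \to \NN^{\sharp}$; this equips $R := \End(\NN)$ with an involution (possibly twisted by a unit), and regular $\epsilon$-symmetric forms on $\NN^n$ correspond to invertible hermitian matrices over $R$ modulo hermitian congruence by $\GL_n(R)$. By (KS+), $R$ is a local $k$-algebra complete in its radical-adic topology, and since $k$ is algebraically closed, $R/\rad R$ is a division $k$-algebra, hence equal to $k$. Over the field $k$, any two regular $\epsilon$-symmetric forms of a given rank are isometric; Hensel's lemma, applied to the smooth congruence action of $\GL_n$ on invertible hermitian matrices, then lifts this isometry from the residue field up to $R$. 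The main obstacle lies precisely here: verifying that the duality descends to a well-behaved involution on $R$ compatible with $\epsilon$-symmetry, and carrying out the Hensel lifting in the hermitian setting; the standard reference is \cite[II~Prop.~7.1.1]{knus:quadratic_hermitian_forms}.
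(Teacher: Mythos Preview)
The paper does not actually prove this theorem; it merely recalls it from Quebbemann--Scharlau--Schulte \cite[\S3.4(3)]{quebbemann_scharlau_schulte} and Knus \cite[II~Prop.~7.1.1]{knus:quadratic_hermitian_forms}. Your sketch is a faithful unpacking of the argument found in those references: reduce via the Symmetric Krull--Schmidt theorem to a single type $\{\NN,\NN^{\sharp}\}$, dispose of the case $\NN\not\isom\NN^{\sharp}$ as hyperbolic, and in the self-dual case translate the problem to hermitian matrices over the complete local ring $R=\End_{\category{C}}(\NN)$, where the residue field being $k$ and a Hensel-type lifting finish the classification. This is precisely the route the paper defers to, and your closing citation of \cite[II~Prop.~7.1.1]{knus:quadratic_hermitian_forms} confirms that you have located it.

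One point deserves care. The step ``$R/\rad R$ is a division $k$-algebra, hence equal to $k$'' needs $R/\rad R$ to be \emph{algebraic} over $k$; mere completeness in the radical-adic topology, which is all that (KS+) asserts, does not force this (a complete local $k$-algebra can have transcendental residue field). In the intended examples---$\VB(X)$ for $X$ proper over $k$, or projective modules over an artinian $k$-algebra---endomorphism rings are finite-dimensional over $k$, and the cited sources work under such a finiteness hypothesis. You should flag that this finiteness is being used, even though the paper's statement of the theorem, like yours, leaves it implicit.
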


If an $\epsilon$-symmetric object $(\EE,b)$ has type $\{\NN,\NN^{\sharp}\}$, with $\NN$
indecomposable and $\NN \not\isom \NN^{\sharp}$, then in fact $(\EE,b) \isom
H(\NN^r)$ is hyperbolic (where $r$ is such that $\EE \isom \NN^r \oplus \NN^{\sharp r}$), see \cite[Thm.~3.3(3)]{quebbemann_scharlau_schulte} or
\cite[II~Prop.~6.4.1]{knus:quadratic_hermitian_forms}.  Otherwise, $(\EE,b)$ is \linedef{$\NN$-isotypic} if it has type
$\{\NN\}$ with $\NN$ indecomposable (then the \linedef{$\NN$-rank} is  well-defined).

\begin{corollary}
\label{cor:grothendieck}
Retain the hypotheses of Theorem~\ref{thm:grothendieck} and assume
that $k$ has characteristic $\neq 2$.  Let $\NN$ be
an indecomposable object and $(\EE,b)$ an $\NN$-isotypic
$\epsilon$-symmetric object.  If $\NN$ has an $\epsilon$-symmetric
structure then $(\EE,b) \isom H(\NN^m)$ or
$(\EE,b) \isom H(\NN^m)\perp \NN$, depending on whether the $\NN$-rank
of $\EE$ is $2m$ or $2m+1$, respectively.  Furthermore, if $\NN$ is
self-dual but has no $\epsilon$-symmetric structure, then
$(\EE,b)$ is hyperbolic.
\end{corollary}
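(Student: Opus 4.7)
The plan is to reduce both cases to Theorem~\ref{thm:grothendieck}, which asserts that over an algebraically closed field $k$ in an exact $k$-category satisfying (KS+), two $\epsilon$-symmetric objects are isometric as soon as their underlying objects are isomorphic. Writing $\EE \isom \NN^r$ where $r$ is the $\NN$-rank of $\EE$, it thus suffices in each case to exhibit a standard $\epsilon$-symmetric object whose underlying object is isomorphic to $\NN^r$, and then invoke Theorem~\ref{thm:grothendieck}.

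For the first case, I would use the given $\epsilon$-symmetric structure $(\NN,c)$, which in particular provides an isomorphism $\NN \isom \NN^{\sharp}$, so that $H(\NN^m)$ has underlying object $\NN^m \oplus \NN^{\sharp m} \isom \NN^{2m}$. When $r = 2m$, compare $(\EE,b)$ with $H(\NN^m)$; when $r = 2m+1$, compare it instead with $H(\NN^m) \perp (\NN,c)$, whose underlying object is $\NN^{2m+1}$. In either parity, Theorem~\ref{thm:grothendieck} supplies the claimed isometry.

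For the second case, $H(\NN^m)$ again has underlying object $\NN^{2m}$ via $\NN \isom \NN^{\sharp}$, so the desired conclusion $(\EE,b) \isom H(\NN^m)$ follows from Theorem~\ref{thm:grothendieck} the moment the $\NN$-rank $r$ is shown to be even. To verify this, I would fix an isomorphism $\phi : \NN \to \NN^{\sharp}$ and let $R = \End(\NN)$, which by (KS+) is a local $k$-algebra with residue field $k$. The duality induces an anti-automorphism $\tau(a) = \phi^{-1} a^{\sharp} \phi$ on $R$, and a short biduality calculation gives $\tau(u) = u^{-1}$ for $u := \phi^{-1}\phi^{\sharp}\varpi_{\NN} \in R^{\times}$. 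The hypothesis that $\NN$ admits no $\epsilon$-symmetric structure translates precisely into the nonexistence of a unit $v \in R^{\times}$ satisfying $\tau(v) u = \epsilon v$. Via the standard equivalence between $\NN$-isotypic $\epsilon$-symmetric objects and $\epsilon$-hermitian forms over $(R,\tau)$ (cf.\ \cite{knus:quadratic_hermitian_forms}, \cite{quebbemann_scharlau_schulte}), this says exactly that no rank-one $\epsilon$-hermitian form over $(R,\tau)$ exists. Every diagonal entry of a nondegenerate $\epsilon$-hermitian matrix over $R$ must therefore lie in the radical, so nondegeneracy forces some off-diagonal entry to be a unit, yielding a hyperbolic plane; a straightforward induction on $r$ then decomposes the form into an orthogonal sum of hyperbolic planes, forcing $r$ to be even and the form to be hyperbolic.

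The main obstacle will be this parity/hyperbolicity step in the second case. The first case is essentially a formal comparison of underlying objects, but the second requires faithfully translating the categorical condition on $\NN$ into ring-theoretic data on $(R,\tau)$ and then exploiting both the algebraic closure of $k$ and $\mathrm{char}\, k \neq 2$ in the hyperbolic-plane induction. Care is also needed with the exact nature of $\tau$: the biduality computation shows $\tau^2 = \mathrm{Int}(u)$ rather than $\tau^2 = \id$, so any invocation of standard hermitian form theory over $(R,\tau)$ must be adapted to this slightly more general setting.
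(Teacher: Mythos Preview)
Your proposal is correct. For the first case it matches the paper's argument: both exhibit an $\epsilon$-symmetric object with underlying object $\NN^r$ and invoke Theorem~\ref{thm:grothendieck}; the paper routes through an intermediate comparison $(\EE,b)\isom\NN^{\perp n}$ before identifying $\NN^{\perp 2m}\isom H(\NN^m)$, while you compare directly with $H(\NN^m)$ or $H(\NN^m)\perp(\NN,c)$, but the idea is identical.

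For the second case the approaches diverge in presentation. The paper simply cites the ``reduction theorem'' technique of \cite[Thm.~3.3(3)]{quebbemann_scharlau_schulte} as a black box. You instead unpack that technique: pass via Morita theory to $\epsilon$-hermitian forms over the local ring $R=\End(\NN)$ with the anti-automorphism $\tau$ induced by a chosen $\phi:\NN\to\NN^{\sharp}$, translate ``no $\epsilon$-symmetric structure on $\NN$'' into ``no unit $v$ with $\tau(v)u=\epsilon v$,'' deduce that every diagonal entry of the Gram matrix lies in the radical, and then peel off hyperbolic planes by induction. This is precisely the content of the cited reduction theorem, so your route buys a self-contained argument at the cost of the bookkeeping you flag (handling $\tau^2=\mathrm{Int}(u)$ rather than an honest involution, and using $\mathrm{char}\,k\neq 2$ in the isotropic-vector step). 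Once $r=2m$ is established, note that you can again appeal to Theorem~\ref{thm:grothendieck} to identify your sum of hyperbolic planes with $H(\NN^m)$, so the two arguments reconverge.
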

\begin{proof}
Let $\EE$ have $\NN$-rank $n$.  If $\NN$ has an $\epsilon$-symmetric
structure, then $(\EE,b) \isom \NN^{\perp n}$ by
Theorem~\ref{thm:grothendieck}.  Thus $(\EE,b)$ has a symmetric summand
$\NN^{\perp 2m}$,
where $n=2m$ or $n=2m+1$.  But $\NN^{2m} \isom \NN^m \oplus
(\NN^{\sharp})^m$ and hence, again by Theorem \ref{thm:grothendieck},
there's an isometry $\NN^{\perp 2m} \isom H(\NN^m)$.  If $\NN$ has no
$\epsilon$-symmetric structure, the statement follows by the ``reduction
theorem'' technique \cite[Thm.~3.3(3)]{quebbemann_scharlau_schulte}.
\end{proof}

Finally, we show that under the hypotheses of
Theorem~\ref{thm:grothendieck}, a proof of (sM) can be broken into two
components.

\begin{proposition}
\label{prop:stably}
Let $k$ be an algebraically closed field of characteristic $\neq 2$
and $(\category{C},{}^{\sharp},\varpi)$ an exact
$k$-category with duality satisfying (KS+).  Assume that:
\begin{enumerate}
\item If $\NN$ is a self-dual indecomposable object then any
$\NN$-isotypic part of a metabolic object is metabolic.

\item If $\NN$ is an indecomposable $\epsilon$-symmetric object such
that $\NN \perp H(\NN^m)$ is metabolic then $\NN$ is metabolic.
\end{enumerate}
Then $(\category{C},{}^{\sharp},\varpi)$ satisfies (sM).
\end{proposition}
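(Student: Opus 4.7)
The plan is to use the symmetric Krull--Schmidt theorem to reduce metabolicity of $(\EE,b)$ to metabolicity of each of its isotypic components, and then to handle that component with Corollary~\ref{cor:grothendieck} together with hypotheses (1) and (2). Start with $(\EE,b)$ of trivial Witt class, so that $(\EE,b)\perp(\MM,h)$ is metabolic for some metabolic $(\MM,h)$. Under (KS+) the symmetric Krull--Schmidt theorem yields essentially unique orthogonal decompositions $(\EE,b)\isom\bigperp_{\NN}(\EE_{\NN},b_{\NN})$ and $(\MM,h)\isom\bigperp_{\NN}(\MM_{\NN},h_{\NN})$ into $\{\NN,\NN^{\sharp}\}$-isotypic pieces indexed by isomorphism classes of indecomposables, and by uniqueness the $\NN$-isotypic part of the orthogonal sum is $(\EE_{\NN},b_{\NN})\perp(\MM_{\NN},h_{\NN})$. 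Since an orthogonal sum of lagrangians is a lagrangian, it suffices to show each $(\EE_{\NN},b_{\NN})$ is metabolic.

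First I would dispose of the easy cases. For $\NN\not\isom\NN^{\sharp}$, every $\{\NN,\NN^{\sharp}\}$-isotypic $\epsilon$-symmetric form is hyperbolic (cited just before Corollary~\ref{cor:grothendieck}), so $(\EE_{\NN},b_{\NN})$ is metabolic. For $\NN$ self-dual indecomposable without $\epsilon$-symmetric structure, the last assertion of Corollary~\ref{cor:grothendieck} says every $\NN$-isotypic $\epsilon$-symmetric form is hyperbolic, so again we are done. In the remaining case, $\NN$ is self-dual indecomposable and admits an $\epsilon$-symmetric structure; Corollary~\ref{cor:grothendieck} then gives either $(\EE_{\NN},b_{\NN})\isom H(\NN^{a})$, which is already metabolic, or $(\EE_{\NN},b_{\NN})\isom H(\NN^{a})\perp\NN$, which is the essential case.

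For this remaining subcase, I would apply hypothesis (1) to the metabolic objects $(\MM,h)$ and $(\EE,b)\perp(\MM,h)$ to get that both $(\MM_{\NN},h_{\NN})$ and $(\EE_{\NN},b_{\NN})\perp(\MM_{\NN},h_{\NN})$ are metabolic. Corollary~\ref{cor:grothendieck} forces $(\MM_{\NN},h_{\NN})$ to be $H(\NN^{b})$ or $H(\NN^{b})\perp\NN$. In the latter event, $(\MM_{\NN},h_{\NN})$ is itself of the form $\NN\perp H(\NN^{b})$ and hypothesis (2) yields that $\NN$ is metabolic. In the former, $(\EE_{\NN},b_{\NN})\perp(\MM_{\NN},h_{\NN})\isom H(\NN^{a+b})\perp\NN$ is metabolic, and hypothesis (2) again yields $\NN$ metabolic. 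In either event, $(\EE_{\NN},b_{\NN})\isom H(\NN^{a})\perp\NN$ is metabolic, completing the reduction. The main obstacle will be this last parity argument: the extra copy of $\NN$ sitting in $(\EE_{\NN},b_{\NN})$ must be accounted for by a metabolic companion producing a form of shape $H(\NN^{c})\perp\NN$, at which point hypothesis (2) is precisely what lets one discard it; everything else is driven by Krull--Schmidt uniqueness.
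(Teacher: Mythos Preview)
Your argument is correct and follows essentially the same route as the paper's proof: decompose via the symmetric Krull--Schmidt theorem, discard the types $\{\NN,\NN^{\sharp}\}$ with $\NN\not\isom\NN^{\sharp}$ and the self-dual $\NN$ without $\epsilon$-symmetric structure as automatically hyperbolic, and in the remaining isotypic case use hypothesis~(1) to make both metabolics $\NN$-isotypic and then a parity count together with hypothesis~(2) to force $\NN$ itself metabolic. The paper phrases the setup with two metabolic objects $(\MM_1,b_1)$ and $(\MM_2,b_2)\isom(\EE,b)\perp(\MM_1,b_1)$ rather than your $(\MM,h)$ and $(\EE,b)\perp(\MM,h)$, but this is purely notational.
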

\begin{proof}
Let $(\EE,b)$ be an $\epsilon$-symmetric stably metabolic object and
write $(\EE,b) \perp (\MM_1,b_1) \isom (\MM_2,b_2)$ for metabolic
objects $(\MM_1,b_1)$ and $(\MM_2,b_2)$.  
If every element of the symmetric Krull--Schmidt decomposition of
$(\EE,b)$ has type $\{\NN,\NN^{\sharp}\}$ with $\NN \not\isom
\NN^{\sharp}$, then already $(\EE,b)$ is hyperbolic.  Otherwise,
choose an $\NN$-isotypic part.
Using assumption (1), we can reduce to the
case where $(\EE,b)$, $(\MM_1,b_1)$, and $(\MM_2,b_2)$ are all
$\NN$-isotypic.

First, if $\NN$ has no structure of $\epsilon$-symmetric object, then
$(\EE,b)$ is already hyperbolic by Corollary~\ref{cor:grothendieck}.
Thus we may assume that $\NN$ has a structure of $\epsilon$-symmetric
object.  If $\EE$ has even $\NN$-rank, the by
Corollary~\ref{cor:grothendieck}, $(\EE,b)$ is already hyperbolic.  If
$\EE$ as odd $\NN$-rank, then either $\MM_1$ or $\MM_2$ has odd
$\NN$-rank. But then Corollary~\ref{cor:grothendieck} together with
assumption (2) implies that $\NN$ is metabolic and thus $(\EE,b)$ is
metabolic.  
\end{proof}

Using Proposition \ref{prop:stably}, we can verify (sM) in some cases.
For example, if $X=\P^1$ over an algebraically closed field of
characteristic $\neq 2$, then indecomposable vector bundles are just
line bundles (see \cite{grothendieck:fibres}).  For a given duality
$\HHom(-,\LL)$ on $\VB(X)$, there is at most one self-dual
indecomposable line bundle, namely $\NN=\OO(d)$ if $\LL\isom\OO(2d)$ is
even.  In particular, an $\NN$-isotypic part is hyperbolic if and only
if it has even rank.  Thus, both hypotheses (1) and (2) of
Proposition~\ref{prop:stably} are verified over $\P^1$.

We wonder if (sM) holds for any scheme proper over a field of
characteristic $\neq 2$.

\subsection{On the vanishing of the Euler class}
\label{subsec:Euler_classes}

In this section, we investigate necessary conditions for the vanishing
of the Grothendieck--Witt-theoretic Euler
class  $\euler{\VV} \in GW^r(X,\det\VV\dual)$ of a vector bundle $\VV$ of
rank $r$ on a scheme $X$ with 2 invertible. The following
important special case of Corollary~\ref{cor:euler_metabolic} gives a
sufficient condition for the vanishing of the Euler class.

\begin{proposition}[Fasel--Srinivas~{\cite[Prop.~22]{fasel_srinivas:chow-witt}}]
\label{prop:euler_vanishing}
Let $X$ be a scheme with 2 invertible and $\VV$ a vector bundle of
rank $r$ on $X$.  If $\VV$ has a free sub- or quotient line bundle then
$\euler{\VV} = 0$ in $GW^r(X,\det\VV\dual)$.
\end{proposition}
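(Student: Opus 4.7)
The plan is to reduce to the case $\VV = \OO_X$ via the Whitney sum formula, and then verify $\euler{\OO_X} = 0$ directly using the affine bundle construction of the Euler class.

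A free sub- or quotient line bundle on $\VV$ fits into a short exact sequence $0 \to \OO_X \to \VV \to \WW \to 0$ or $0 \to \WW \to \VV \to \OO_X \to 0$, respectively. Either way, the Whitney sum formula for Euler classes (Proposition~\ref{prop:euler_properties}(2)) gives
\[
\euler{\VV} = \euler{\OO_X} \cdot \euler{\WW} \in GW^r(X,\det\VV\dual),
\]
under the pairing induced by $\OO_X\dual \tensor \det\WW\dual \to \det\VV\dual$. Hence it suffices to prove $\euler{\OO_X} = 0$ in $GW^1(X,\OO_X)$.

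To verify this, I would work directly with the Koszul complex underlying the definition of $\euler{\OO_X}$. Here $\aff{\OO_X} = \A^1_X$, and the Koszul complex $K$ is the two-term complex $p\pullback\OO_X\dual \mapto{t} \OO_{\A^1_X}$, where $t$ is the tautological coordinate, equipped with the symmetric structure $\Phi$ of \S\ref{subsec:General_properties}. Consider the unit section $\sigma_1 : X \to \A^1_X$ at $t=1$, which is disjoint from the zero section $s$. Its pullback $\sigma_1\pullback K$ is the complex $\OO_X \mapto{1} \OO_X$, which is acyclic, so the symmetric pair $(\sigma_1\pullback K, \sigma_1\pullback \Phi)$ represents zero in $GW^1(X,\OO_X)$. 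By $\A^1$-homotopy invariance of derived Grothendieck--Witt groups under affine bundles (Schlichting), the map $p\pullback$ is an isomorphism; since $p \circ \sigma_1 = p \circ s = \id_X$, both $s\pullback$ and $\sigma_1\pullback$ are inverses to $p\pullback$, and hence agree. Consequently $\euler{\OO_X} = s\pullback(K,\Phi) = \sigma_1\pullback(K,\Phi) = 0$.

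The main obstacle is verifying that $\sigma_1\pullback \Phi$ is the expected symmetric structure on $\sigma_1\pullback K$; this should follow from the functoriality of the construction of $\Phi$ in Fasel--Srinivas~\cite{fasel_srinivas:chow-witt}, but deserves explicit verification. An alternative route in the free-quotient case circumvents this issue: apply Proposition~\ref{prop:nice_rep} to write $\euler{\VV} = H_{\det\VV\dual}(\exter{}{\WW})$, and observe that $\exter{}{\WW} = s\pullback s\pushforward \OO_X$ is precisely the image of $\euler{\WW} \in GW^{r-1}(X,\det\WW\dual)$ under the forgetful map $f$ in the Karoubi periodicity sequence~\eqref{eq:Karoubi_periodicity}; exactness then yields $H_{\det\VV\dual}(\exter{}{\WW}) = H_{\det\VV\dual}(f(\euler{\WW})) = 0$.
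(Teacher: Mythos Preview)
The paper does not supply its own proof of this statement: it is quoted directly from Fasel--Srinivas, and the sentence preceding it merely places it in the context of Corollary~\ref{cor:euler_metabolic} (which yields metabolicity of $\euler{\VV}$, not its vanishing in $GW^r$, so that cross-reference is contextual rather than a logical deduction). There is therefore nothing in the paper to compare your argument against; it stands on its own merits.

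Your main line via the Whitney sum formula reducing to $\euler{\OO_X}=0$ is correct. The homotopy-invariance computation of $\euler{\OO_X}$ works but is heavier than needed and imports an external result (and, as you note, leaves the compatibility of $\sigma_1\pullback\Phi$ to be checked). Using only tools already in the paper: by Proposition~\ref{prop:euler_properties}(1), as invoked in the proof of the proposition immediately following this one, $\euler{\OO_X} = H_{\OO_X}([\OO_X]) \in GW^1(X,\OO_X)$; the Karoubi sequence~\eqref{eq:Karoubi_periodicity} then kills this, since $[\OO_X]$ is the image of the unit form $\langle 1\rangle \in GW^0(X,\OO_X)$ under the forgetful map. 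This removes the verification you flagged.

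Your alternative via Proposition~\ref{prop:nice_rep} and Karoubi periodicity is also correct and is arguably the slickest route. The key observation you need to make explicit is that freeness of the quotient forces $\det\WW\cong\det\VV$, so that $\euler{\WW}$ lives in $GW^{r-1}(X,\det\VV\dual)$ with the \emph{same} twist appearing in~\eqref{eq:Karoubi_periodicity}; only then does $\exter{}{\WW}=f(\euler{\WW})$ lie in the image of the relevant forgetful map. For the free sub-line bundle case not covered by this alternative, either dualize the short exact sequence and use~\eqref{eq:dual}, or simply fall back on the Whitney argument, which is symmetric in sub and quotient.
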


One may ask when the existence of a free sub- or quotient line bundle
is equivalent to the vanishing of the Grothendieck--Witt-theoretic
Euler class of a vector bundle.  For vector bundles of rank one, this question always has a positive
answer; the following simple argument was communicated to the author
by J.\ Fasel.

\begin{proposition}
Let $X$ be a scheme with 2 invertible and $\LL$ be a line bundle on $X$.
Then $\LL\isom \OO_X$ if and only if $\euler{\LL} =0$ in
$GW^1(X,\det\LL\dual)$.
\end{proposition}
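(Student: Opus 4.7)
The ``if'' direction follows immediately from Proposition~\ref{prop:euler_vanishing}: an isomorphism $\OO_X \isom \LL$ realizes $\OO_X$ as a free sub-line bundle of $\LL$ (equivalently, as a free quotient line bundle).

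For the ``only if'' direction, the plan is to make $\euler{\LL}$ completely explicit and then apply Karoubi periodicity.  I first unwind the Koszul-complex definition of the Euler class in the rank-one case: on the affine bundle $p : \aff{\LL} \to X$, the complex $K(p\pullback\LL\dual, f)$ reduces to the two-term complex $[p\pullback\LL\dual \mapto{f} \OO_{\aff{\LL}}]$ concentrated in degrees $-1$ and $0$, and the symmetric structure $\Phi$---built from the perfect pairing $\exter{0}{p\pullback\LL\dual} \tensor \exter{1}{p\pullback\LL\dual} \to p\pullback\LL\dual$---reduces to the identity in each degree.  Pulling back along the zero section $s: X \to \aff{\LL}$, which annihilates $f$, produces a representative of $\euler{\LL}$ as the split complex $[\LL\dual \mapto{0} \OO_X]$ with its canonical self-duality.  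I would then identify this class with the hyperbolic image $H_{\LL\dual}([\OO_X]) \in GW^1(X, \LL\dual)$.

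Once this identification is in place, by the Karoubi periodicity sequence~\eqref{eq:Karoubi_periodicity} the vanishing of $H_{\LL\dual}([\OO_X])$ is equivalent to $[\OO_X]$ lying in the image of the forgetful map $GW^0(X, \LL\dual) \to K_0(X)$.  A preimage is a regular $\LL\dual$-valued symmetric bilinear form $(\EE, b, \LL\dual)$ with $[\EE] = [\OO_X]$ in $K_0(X)$.  The rank map forces $\EE$ to have rank~$1$, and the determinant map $K_0(X) \to \Pic(X)$ then forces $\EE \isom \det\EE \isom \OO_X$.  Finally, regularity of $b$ yields $\OO_X \isom \EE \tensor \EE \isom \LL\dual$, hence $\LL \isom \OO_X$.

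The main obstacle is the identification $\euler{\LL} = H_{\LL\dual}([\OO_X])$: one must carefully track the sign conventions and canonical identifications in the Koszul symmetric structure in order to match it with the standard hyperbolic pairing on $\OO_X \oplus (\OO_X)\sheafsharp$.  Once that step is established, the remainder is a clean application of the Karoubi sequence together with the detection of line bundles by the determinant $K_0(X) \to \Pic(X)$.
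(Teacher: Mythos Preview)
Your approach is sound in outline but takes a more roundabout path than the paper, and contains one gap worth flagging.

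First, a labeling issue: you have the ``if'' and ``only if'' directions swapped. In ``$\LL\isom\OO_X$ if and only if $\euler{\LL}=0$,'' the ``if'' direction is $\euler{\LL}=0 \Rightarrow \LL\isom\OO_X$; what you deduce from Proposition~\ref{prop:euler_vanishing} is the ``only if'' direction.

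For the substantive direction, both you and the paper begin from the identification $\euler{\LL}=H_{\LL\dual}([\OO_X])$; the paper simply cites the explicit formula of Proposition~\ref{prop:euler_properties}(1) rather than unwinding the Koszul complex by hand. From there the arguments diverge. The paper pushes \emph{forward} along the forgetful map $GW^1(X,\LL\dual)\to K_0(X)$: the class $H_{\LL\dual}(\OO_X)$ is sent to $[\OO_X]-[\LL\dual]$, and the determinant $K_0(X)\to\Pic(X)$ recovers the class of $\LL$, so vanishing of $\euler{\LL}$ forces $\LL$ trivial in two lines. You instead go \emph{backward} through the Karoubi sequence, lifting $[\OO_X]$ to $GW^0(X,\LL\dual)$ and analyzing the lift. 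This is legitimate but less direct.

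The gap: elements of $GW^0(X,\LL\dual)$ are formal \emph{differences} of isometry classes, so the assertion ``a preimage is a regular form $(\EE,b,\LL\dual)$ with $[\EE]=[\OO_X]$'' is not justified as written. What you actually obtain is $[\EE_1]-[\EE_2]=[\OO_X]$ with each $\EE_i$ supporting a regular $\LL\dual$-valued form. The repair is easy: regularity gives $(\det\EE_i)^{\tensor 2}\isom(\LL\dual)^{\tensor\,\rk\EE_i}$, and combining $\det\EE_1\isom\det\EE_2$ with $\rk\EE_1-\rk\EE_2=1$ yields $\LL\dual\isom\OO_X$. Note, however, that this patched argument is essentially the paper's determinant trick in disguise, so the detour through $GW^0$ buys nothing over the direct forgetful-then-determinant route.
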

\begin{proof}
By Proposition~\ref{prop:euler_properties} part 1, we have
$\euler{\LL} = H_{\LL\dual}(\OO_X) \in GW^1(X,\LL\dual)$.  In
particular, $\euler{\LL}$ maps to $[\OO_X]-[\LL\dual]$ under the
forgetful map $GW^1(X,\LL\dual) \to K_0(X)$.  This, in turn, maps to
the isomorphism class of $\LL$ under the determinant homomorphism
$\det : K_0(X) \to \Pic(X)$.  Thus if $\euler{\LL}=0$ then $\LL$ is
trivial.
\end{proof}

There exist vector bundles of rank 2 with no sub- or quotient line
bundle but with vanishing Euler class:\ such examples arise as stably
metabolic but nonmetabolic skew-symmetric forms of rank 2 and trivial
determinant.  However, if the stable metabolicity property (sM) is
satisfied (see Definition \ref{rem:cancellation}), this obstruction
should disappear.

\begin{question}
\label{question:2}
Let $X$ be a scheme with 2 invertible and satisfying (KS+) and (sM).
Let $\VV$ be a vector bundle of rank 2.  Is the vanishing the the
Euler class $\euler{\VV} \in GW^2(X,\det\VV\dual)$ equivalent to $\VV$
having a free sub- or quotient line bundle?
\end{question}

To give some partial results, we can proceeding analogously as in the proof
of Corollary~\ref{cor:lineiffmetabolic}, first developing the rank 2
version of Theorem~\ref{thm:LGrass_D3} utilizing the exceptional
isomorphism $\Dynkin{A}_1=\Dynkin{C}_1$.

\begin{theorem}
\label{thm:LGrass_2}
Let $X$ be a scheme with 2 invertible and $\VV$ a vector bundle of
rank $2$ on $X$.  Then
$$
\Phi_{\VV} : \P(\VV) \to \LGrass(\lpow{1}{\VV})
$$ 
is an isomorphism of $X$-schemes.
\end{theorem}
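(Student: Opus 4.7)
The plan is to follow the structure of the proof of Theorem~\ref{thm:LGrass_D3}, now using the exceptional isomorphism $\Dynkin{A}_1=\Dynkin{C}_1$ in place of $\Dynkin{A}_3=\Dynkin{D}_3$. The decisive simplification is that for a rank-2 bundle $\VV$, the middle exterior power form $\lpow{1}{\VV}=(\VV,\wedge,\det\VV)$ is literally the canonical alternating form on $\VV$ itself, and the homomorphism $\lpow{1}{}:\GL(\VV)\to\GSp(\lpow{1}{\VV})$ from Proposition~\ref{prop:midext_functor} is the identity: every $g\in\GL(\VV)$ satisfies the tautology $g(v)\wedge g(w)=\det(g)(v\wedge w)$, so already lies in $\GSp(\lpow{1}{\VV})$ as a similitude of multiplier $\det g$. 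Thus the rank-2 analogue of Proposition~\ref{prop:A3=D3} is the equality $\GL(\VV)=\GSp(\lpow{1}{\VV})$, with no $\muu_2$ ambiguity.

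First I would match the two moduli functors directly. The scheme $\P(\VV)$ classifies quotient line bundles $f:\VV\to\NN$, equivalently (by passage to kernels) rank-1 sub-vector-bundles $\WW\hookrightarrow\VV$. On the other side, $\LGrass(\lpow{1}{\VV})$ classifies rank-1 lagrangian subbundles of $\VV$; but in any regular alternating form of rank 2, \emph{every} rank-1 subbundle $\WW\hookrightarrow\VV$ is automatically lagrangian, since the restriction $\wedge|_{\WW\tensor\WW}$ factors through $\exter{2}{\WW}=0$. Hence both functors tautologically parametrize the same objects, namely rank-1 subbundles of $\VV$.

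Next I would verify that $\Phi_{\VV}$ realizes precisely this identification. For $s=1$ the contraction $d^{(0)}f$ coincides with $f$ itself, so the Pascal-rule sequence of Proposition~\ref{prop:getting_metabolics} reduces to
\[
0\to\WW\to\VV\mapto{f}\NN\tensor\exter{0}{\WW}=\NN\to 0,
\]
which is precisely the exact sequence presenting $\WW=\ker f$ as a lagrangian. On moduli, $\Phi_{\VV}$ is therefore the bijection $(f:\VV\to\NN)\mapsto\ker f$, and so is an isomorphism by Yoneda. If one prefers the parabolic-subgroup argument of Theorem~\ref{thm:LGrass_D3}, it transposes verbatim: the stabilizer $\Group{P}\subset\GL_2$ of $\OO_X\subset\OO_X^2$ maps under the identity $\lpow{1}{}$ to the stabilizer in $\GSp(\lpow{1}{\OO_X^2})$ of the same line, now viewed as a lagrangian, and the conclusion follows from SGA~3 as in the rank-4 case.

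Unlike the $\Dynkin{D}_3$ situation, the $\Dynkin{C}_1$-lagrangian Grassmannian $\LGrass(\lpow{1}{\VV})\to X$ has geometrically integral fibers (cf.\ \S\ref{subsec:Lagrangian_grassmannians}), so it is already connected over $X$ and no second component $\Phi_{\VV}'$ appears in the statement. The main obstacle is essentially nonexistent: in rank 2 the symplectic isotropy condition is vacuous on 1-dimensional subspaces, so the theorem reduces to unwinding the definitions of the two moduli problems and of the map $\Phi_{\VV}$ between them.
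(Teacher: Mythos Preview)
Your proposal is correct and includes essentially the paper's own approach: the paper's proof is a one-sentence appeal to the isomorphism $\GL(\VV)\isom\GSp(\lpow{1}{\VV})$ and the resulting identification of projective homogeneous bundles, exactly as in your parabolic-subgroup paragraph. Your additional direct matching of the two moduli functors (every rank-1 subbundle of a rank-2 alternating form is automatically lagrangian, and $\Phi_{\VV}$ is the tautological $f\mapsto\ker f$) is a welcome and more elementary alternative that the paper does not spell out.
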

\begin{proof}
The proof is similar to, except easier than, that of
Theorem~\ref{thm:LGrass_D3}.  In place of
Proposition~\ref{prop:A3=D3}, we use the fact that the functor
$\lpow{1}{}$ defines an isomorphism of group schemes $\GL(\VV) \isom
\GSp(\lpow{1}{\VV})$ over $X$, inducing the required isomorphism of
projective homogeneous bundles.
\end{proof}

In particular, we have the following analogue of
Corollary~\ref{cor:lineiffmetabolic}.

\begin{corollary}
\label{cor:metabolic_2}
A vector bundle $\VV$ of rank $2$ fits into a short exact sequence $0
\to \NN \to \VV \to \LL \to 0$ for line bundles $\NN$ and $\LL$ if and
only if $\lpow{1}{\VV}$ is metabolic.
\end{corollary}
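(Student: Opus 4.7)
The plan is to leverage Theorem~\ref{thm:LGrass_2}, which provides a canonical isomorphism $\Phi_{\VV} : \P(\VV) \to \LGrass(\lpow{1}{\VV})$ of $X$-schemes. With that tool in hand, the corollary becomes essentially a translation between moduli-theoretic sections. First, I would dispatch the easy direction: if $\VV$ sits in a short exact sequence $0 \to \NN \to \VV \to \LL \to 0$ with $\NN$ a line bundle (equivalently, since $\VV$ has rank 2, $\LL=\VV/\NN$ is automatically a line bundle), then Proposition~\ref{prop:getting_metabolics}(2) applied with $s=1$ exhibits $\NN$ itself as a lagrangian of the alternating form $\lpow{1}{\VV} = (\VV,\wedge,\det\VV)$, so $\lpow{1}{\VV}$ is metabolic.

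For the converse, suppose $\lpow{1}{\VV}$ is metabolic, and choose a lagrangian $\FF \subset \VV$ (a rank 1 subbundle locally a direct summand on which the wedge pairing vanishes). This datum is precisely a section $X \to \LGrass(\lpow{1}{\VV})$ of the structure morphism. Applying the inverse of $\Phi_{\VV}$ yields a section $X \to \P(\VV)$, which by the moduli description of $\P(\VV)$ recalled in \S\ref{subsec:Lagrangian_grassmannians} corresponds to a quotient line bundle $\VV \to \LL$. Taking $\NN = \ker(\VV \to \LL)$, which is automatically a line bundle since $\VV$ has rank 2, gives the desired short exact sequence $0 \to \NN \to \VV \to \LL \to 0$.

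There is not really a ``hard step'' here — the content has been packaged into Theorem~\ref{thm:LGrass_2}, itself a rank-2 analogue (via $\Dynkin{A}_1=\Dynkin{C}_1$) of the main Theorem~\ref{thm:LGrass_D3}. The only subtlety worth noting is that in the rank 2 case, unlike the rank 4 case, the symplectic lagrangian grassmannian is connected (the $\Dynkin{C}_1$ case has geometrically integral fibers), so there is no auxiliary component $\P(\VV\dual)$ to worry about: the single scheme $\P(\VV)$ already parametrizes all lagrangians, and sub- and quotient line bundles of a rank 2 vector bundle are in bijective correspondence anyway via the short exact sequence. Consequently the proof is a brief invocation of Proposition~\ref{prop:getting_metabolics} in one direction and of Theorem~\ref{thm:LGrass_2} in the other.
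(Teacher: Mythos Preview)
Your proposal is correct and follows essentially the same approach as the paper, which states the corollary without explicit proof as an immediate consequence of Theorem~\ref{thm:LGrass_2} (just as Corollary~\ref{cor:lineiffmetabolic} follows from Theorem~\ref{thm:LGrass_D3}). Your explicit unpacking of the two directions via Proposition~\ref{prop:getting_metabolics} and the moduli interpretation of sections of $\P(\VV)$ versus $\LGrass(\lpow{1}{\VV})$ is exactly the intended argument.
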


Finally, under some stability hypotheses, we can answer
Question~\ref{question:2} in the affirmative.

\begin{proposition}
\label{prop:vanishing_2}
Let $X$ be a scheme with 2 invertible and satisfying (sM) and (sGW).
Let $\VV$ be a vector bundle of rank 2 on $X$.  Then $\VV$ has a free
sub- or quotient line bundle if and only if $\euler{\VV} = 0$ in
$GW^2(X,\det\VV\dual)$.
\end{proposition}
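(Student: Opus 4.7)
The forward implication is immediate from Proposition~\ref{prop:euler_vanishing}. For the converse, my plan is a two-stage argument: first use hypothesis (sM) to extract any sub- or quotient line bundle of $\VV$ from the vanishing of the Witt image of $\euler{\VV}$, then use the stronger Grothendieck--Witt vanishing together with (sGW) to upgrade it to a free one.

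First I project to the Witt group: applying the explicit formula of Proposition~\ref{prop:euler_properties}(1) and using that hyperbolic classes vanish in $W^2$, the image of $\euler{\VV}$ in $W^2(X,\det\VV\dual)$ reduces (up to a unit scalar) to $\midext{\VV\dual} = [\lpow{1}{\VV\dual}]$. Thus $\lpow{1}{\VV\dual}$ has trivial Witt class, so by (sM) it is metabolic, and Corollary~\ref{cor:metabolic_2} applied to $\VV\dual$ produces a short exact sequence of line bundles $0 \to \MM \to \VV \to \LL \to 0$.

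Next I apply Proposition~\ref{prop:nice_rep} to this sequence: we obtain $\euler{\VV} = [H_{\det\VV\dual}(\OO_X)] - [H_{\det\VV\dual}(\MM\dual)]$, so the vanishing of $\euler{\VV}$ forces these two classes to agree in $GW^2(X,\det\VV\dual)$. Hypothesis (sGW) then upgrades this equality of classes to an honest isometry of rank-two skew-symmetric hyperbolic forms; in particular, the underlying vector bundles agree: $\OO_X \oplus \det\VV\dual \isom \MM\dual \oplus \LL\dual$.

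To transfer this split-bundle isomorphism to $\VV\dual$ itself, I would observe that $\lpow{1}{\VV\dual}$ and the split hyperbolic $H_{\det\VV\dual}(\LL\dual)$ are both metabolic with the common lagrangian $\LL\dual$, and hence in the derived Grothendieck--Witt framework of Balmer--Walter they represent the same class in $GW^2(X,\det\VV\dual)$. A second application of (sGW) then yields an isometry $\lpow{1}{\VV\dual} \isom H_{\det\VV\dual}(\LL\dual)$, which forces the splitting $\VV\dual \isom \LL\dual \oplus \MM\dual$. Combined with the previous paragraph, $\VV\dual \isom \OO_X \oplus \det\VV\dual$, so $\OO_X$ is a direct summand of $\VV\dual$; equivalently, $\VV$ has $\OO_X$ both as a free sub-line bundle and as a free quotient line bundle. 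The main technical obstacle I anticipate is cleanly justifying the derived-categorical identification of the $GW^2$-class of the metabolic form $\lpow{1}{\VV\dual}$ with that of the split hyperbolic on its lagrangian, so that (sGW) can legitimately be invoked in this classical form-theoretic setting.
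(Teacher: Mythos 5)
Your proof is correct, and it takes a genuinely different route from the paper's. The two arguments agree through the first half: you both use (sM) and Corollary~\ref{cor:metabolic_2} to produce a short exact sequence $0 \to \MM \to \VV \to \LL \to 0$, then Proposition~\ref{prop:nice_rep} and one application of (sGW) to get an isometry $H_{\det\VV\dual}(\OO_X) \isom H_{\det\VV\dual}(\MM\dual)$. At that point the paper invokes the classification of binary line bundle-valued forms from \cite[\S5.2]{auel:clifford} (norm forms on the split double cover) to conclude directly that $\MM\dual$ or $\LL\dual$ is trivial. You instead replace that external classification with a second application of (sGW), built on the observation that $\lpow{1}{\VV\dual}$ and $H_{\det\VV\dual}(\LL\dual)$ are metabolic with a common lagrangian $\LL\dual$ and hence share a Grothendieck--Witt class; (sGW) then upgrades this to an isometry, forcing $\VV\dual \isom \LL\dual\oplus\MM\dual$, and combining with the first isometry gives $\VV \isom \OO_X\oplus\det\VV$.

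On the point you flag as a potential obstacle: the identification $[\lpow{1}{\VV\dual}] = [H_{\det\VV\dual}(\LL\dual)]$ in $GW^2$ is legitimate. In the Balmer--Walter/Schlichting derived framework, a metabolic $\epsilon$-symmetric space $(\MM,h)$ with lagrangian $\LL$ satisfies $[\MM,h] = [H(\LL)]$ by construction of the group, and this relation is in fact already presupposed by the ``equivalently'' in the paper's definition of (sGW) (where ``same Grothendieck--Witt class'' is equated with stable isometry up to metabolics with isomorphic lagrangians). So the inference is sound. Two tradeoffs worth noting: your route avoids appealing to the external classification of binary forms, so it is more self-contained; on the other hand, it uses (sGW) a second time where the paper uses it only once, so in principle the paper's argument is slightly more economical in its hypotheses. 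Your route also delivers a sharper conclusion than stated---$\OO_X$ is a direct summand of $\VV$, not merely a sub- or quotient line bundle---which is consistent, since under (sGW) every classical lagrangian $\LL$ of a rank-two alternating form forces the form to be isometric to $H(\LL)$, so sub-line bundles of rank-two bundles are automatically direct summands. Minor notational slip: having applied Corollary~\ref{cor:metabolic_2} to $\VV\dual$, the resulting exact sequence should be written on $\VV\dual$ (equivalently dualize back to $\VV$); this does not affect the argument.
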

\begin{proof}
By Proposition~\ref{prop:euler_properties} part (1), we have that
$$
\euler{\VV} =
\midext{\VV\dual} + H_{\det\VV\dual}(\OO_X)
$$ 
in $GW^2(X,\det\VV\dual)$. If $\euler{\VV} = 0$, we see that
$\midext{\VV\dual} \in GW^2(X,\det\VV\dual)$ is a metabolic class.
Thus $\lpow{1}{\VV}$ is metabolic by (sM). By Corollary~\ref{cor:metabolic_2}, there is an exact sequence $0 \to \NN \to \VV
\to \LL \to 0$. But then by Proposition \ref{prop:nice_rep}, we have
$\euler{\VV} =
H_{\det\VV\dual}([\OO_X] - [\NN\dual])$.  Thus
$H_{\det\VV\dual}(\OO_X)$ and $H_{\det\VV\dual}(\NN\dual)$ have equal
Grothendieck--Witt classes, hence
$
H_{\det\VV\dual}(\OO_X) \isom H_{\det\VV\dual}(\NN\dual)
$ 
by (sGW).
The classification of binary line bundle-valued quadratic forms
(cf.\ \cite[\S5.2]{auel:clifford}) in
terms of norm forms associated to line bundles on \'etale quadratic
covers of $X$ (in this case, the split cover) shows that either
$\NN\dual$ or $\HHom(\NN\dual,\det\VV\dual) \isom \LL\dual$ is
free. Thus $\VV$ has a free sub- or quotient line bundle.
\end{proof}

At the other extreme (i.e.\ when neither (sM) nor (sGW) are expected
to hold), one can consider the case when $X$ is a noetherian affine
scheme (with 2 invertible) of dimension $d$, where vector bundles are
just projective modules of finite rank. The question of whether the
Euler class in Grothendieck-Witt theory is the only obstruction to
``splitting off a free factor'' has a positive answer when $d=2$ (rank
2 modules being the only case not covered by Serre's splitting
theorem, see Remark \ref{rem:serre_splitting}) by
Fasel--Srinivas~\cite[Thm.~3]{fasel_srinivas:chow-witt}, who also
handle the case of projective modules of rank $\geq 3$ when $d=3$. The
case $d=3$ and projective modules of rank 2 with trivial determinant
follows by the discussion surrounding Fasel
\cite[Thm.~4.3]{fasel:sphere}.  In general, one can only hope that the
Euler class is the obstruction to splitting off a free factor for
projective modules of rank $\geq d$ (rank $d$ being the crucial case),
a question asked in~\cite{fasel_srinivas:chow-witt}.

As for vector bundles of rank four, we have the following partial
result under a stability hypothesis.

\begin{proposition}
\label{prop:vanishing_4}
Let $X$ be a scheme with 2 invertible and satisfying (sGW). Let $\VV$
be a vector bundle of rank 4 with $\det\VV \isom \OO_X$. Then $\VV$
has a sub- or quotient line bundle if and only if $\euler{\VV}=0$ in $GW^4(X)$.
\end{proposition}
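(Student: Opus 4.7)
The plan is to parallel the proof of Proposition~\ref{prop:vanishing_2}, substituting the rank 4 criterion (Corollary~\ref{cor:lineiffmetabolic}) for the rank 2 one (Corollary~\ref{cor:metabolic_2}). The hypothesis $\det\VV \isom \OO_X$ plays two crucial roles: it ensures $\det\VV\dual \isom \OO_X$ so that all Grothendieck--Witt classes live in $GW^*(X,\OO_X)$, and it makes available the isometry $\langle-1\rangle\otimes H_{\OO_X}(\FF)\isom H_{\OO_X}(\FF)$ that is essential for simplifying the Euler class formula.

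For the ``if'' direction, assume $\euler{\VV}=0$ in $GW^4(X)$. Applying Proposition~\ref{prop:euler_properties}(1) with $r=4$, $s=2$ gives
$$
0 = \euler{\VV} = \langle -1\rangle\otimes\midext{\VV\dual} + H_{\OO_X}(\OO_X)
$$
in $GW^4(X,\OO_X)$. Tensoring by $\langle-1\rangle$ and using the identity above, $\midext{\VV\dual}$ equals a hyperbolic class in $GW^4(X,\OO_X)$; that is, the Grothendieck--Witt class of $\lpow{2}{\VV\dual}$ coincides with that of a genuine hyperbolic form. Invoking (sGW), these two forms are isometric, so $\lpow{2}{\VV\dual}$ is metabolic. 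Corollary~\ref{cor:lineiffmetabolic} applied to $\VV\dual$ then gives a sub- or quotient line bundle of $\VV\dual$, and dualizing produces one for $\VV$.

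For the ``only if'' direction, by duality (Remark~\ref{rem:dual_or_not}) we may assume $\VV$ has a quotient line bundle, giving $0\to\WW\to\VV\to\LL\to 0$ with $\det\WW\isom\LL\dual$. Proposition~\ref{prop:nice_rep} yields $\euler{\VV}=H_{\OO_X}(\exter{}{\WW})$. On the other hand, the dual sequence $0\to\LL\dual\to\VV\dual\to\WW\dual\to 0$ exhibits $\LL\dual\otimes\WW\dual$ as a lagrangian of $\lpow{2}{\VV\dual}$ via Proposition~\ref{prop:getting_metabolics}(2), so in $GW^{+1}(X,\OO_X)$ the class $[\lpow{2}{\VV\dual}]$ coincides with $H_{\OO_X}(\LL\dual\otimes\WW\dual)$. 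Substituting this identification of $\midext{\VV\dual}$ into Proposition~\ref{prop:euler_properties}(1) produces a second hyperbolic expression for $\euler{\VV}$. Equating the two, and using the duality-invariance $H_{\OO_X}([\FF])=H_{\OO_X}([\FF\dual])$ together with the Koszul K-theoretic identity $\lambda_{-1}(\VV\dual)=\lambda_{-1}(\WW\dual)(1-[\LL\dual])$ in $K_0(X)$, the two expressions cancel, giving $\euler{\VV}=0$.

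The main obstacle is the careful bookkeeping of sign and shift conventions in derived Grothendieck--Witt theory, together with the task of lifting a class-level identity in $GW^4(X,\OO_X)$ to the form-level isometry needed to invoke Corollary~\ref{cor:lineiffmetabolic}. This lifting is where (sGW) enters essentially: without it one only controls Witt classes, which is sufficient under (sM) (as in Corollary~\ref{cor:with_witt_cancellation}) but here the stronger (sGW) is what makes the stable hyperbolic identification of $\lpow{2}{\VV\dual}$ into an actual metabolic form.
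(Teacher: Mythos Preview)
Your ``if'' direction has a genuine gap. From $\euler{\VV}=0$ you deduce an identity of the shape $[\lpow{2}{\VV\dual}] = -[H(\OO_X)]$ (or, with the correct hyperbolic term, $[\lpow{2}{\VV\dual}] = [H(\VV)]-[H(\OO_X)]$) in $GW^4(X)$, and then claim that the Grothendieck--Witt class of $\lpow{2}{\VV\dual}$ coincides with that of ``a genuine hyperbolic form''. But the right-hand side is a \emph{virtual} class, not the class of a single form, so there is no second actual form to feed into (sGW). Concluding that $\lpow{2}{\VV\dual}$ is metabolic from this would require (sM), which is neither assumed nor implied by (sGW) in general. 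So your route through Corollary~\ref{cor:lineiffmetabolic} does not close.

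The paper proceeds quite differently. First, the Euler class formula it uses is
\[
\euler{\VV} = \langle -1 \rangle\otimes\midext{\VV\dual} + H\bigl([\OO_X]-[\VV\dual]\bigr),
\]
with an extra $-[\VV\dual]$ term that your version omits; this term is essential for the ranks to match. Rearranging now gives an equality between classes of two \emph{actual} rank-$8$ forms, $[\lpow{2}{\VV\dual}\perp H(\OO_X)] = [H(\VV)]$, to which (sGW) legitimately applies, yielding an isometry $\lpow{2}{\VV\dual}\perp H(\OO_X)\isom H(\VV)$. The paper then never attempts to extract metabolicity of $\lpow{2}{\VV\dual}$; instead it applies Roy's trick: a hyperbolic pair $e,f$ in the $H(\OO_X)$ summand maps under this isometry to $v+\vp\in\VV\oplus\VV\dual$ with $\vp(v)=f(e)=1$, so $v$ is unimodular and generates a free direct summand of $\VV$. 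Corollary~\ref{cor:lineiffmetabolic} is bypassed entirely.

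Your ``only if'' computation is also off: the two hyperbolic expressions you produce are both valid representations of the \emph{same} class $\euler{\VV}$, so equating them is a tautology and cannot force $\euler{\VV}=0$. What you have shown is only that $\euler{\VV}$ lies in the image of the hyperbolic map, which is already Proposition~\ref{prop:nice_rep}.
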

\begin{proof}
By Proposition~\ref{prop:euler_properties}, we have
$$
\euler{\VV} = \langle -1 \rangle \tensor \midext{\VV\dual} +
H\bigl(\OO_X - \VV\dual \bigr).
$$
Supposing that $\euler{\VV}=0$, the forms
$\lpow{2}{\VV\dual}\perp H(\OO_X)$ and
$H(\VV)$ become equivalent in $GW^4(X)$.
Hence 
$$
\lpow{2}{\VV\dual}\perp H(\OO_X) \isom
H(\VV)
$$
by (sGW).
Inspired by Roy~\cite[Ex.~7.3]{roy:cancellation}, 
we now proceed similarly as in the proof of
 Proposition~\ref{prop:affine}.
Choose a hyperbolic basis of global sections $e$ and $f$ of $H(\OO_X)$
and consider the image $v + \vp$ in $H(\VV)$ of $e+f$.  Then $\vp(v) =
f(e) = 1$, so that $v$ is a unimodular element generating a free
factor of $\VV$.
\end{proof}

We wonder what the optimal result is in the case of vector bundles
$\VV$ of rank 4.  One approach, suggested by the referee, works over
smooth affine schemes $X$ of dimension 4 over a field $k$ of
characteristic $\neq 2$.  The idea is to use the Chow--Witt-theoretic
top Chern class $\tilde{c}_4({\VV}) \in
\widetilde{CH}{}^4(X,\det\VV\dual)$, which is the precise obstruction
to $\VV$ having a free factor by a theorem of Morel
\cite[Thm.~7.14]{morel}.  See \cite{fasel:Chow-Witt} for more details on the top Chern
class in Chow--Witt theory.  The Gersten--Grothendieck--Witt spectral
sequence provides an edge homomorphism
$\widetilde{CH}{}^4(X,\det\VV\dual) \to GW^4(X,\det\VV\dual)$ mapping
$\tilde{c}_4(\VV)$ to $\euler{\VV}$, see
Fasel--Srinivas~\cite[\S4.2]{fasel_srinivas:chow-witt}.  The interest
then lies in the kernel of this edge homomorphism.

\end{document}